\newtheorem{theorem}{Theorem}[section]
\theoremstyle{definition}
\newtheorem{definition}[theorem]{Definition}
\newtheorem{example}[theorem]{Example}
\theoremstyle{remark}
\newtheorem{Proposition}[theorem]{Proposition}
\newtheorem{assumption}[theorem]{Assumption}
\newtheorem{Remark}[theorem]{Remark}
\numberwithin{equation}{section}
\begin{document}

\title[Asymptotic behavior of random fractional Riesz-Bessel equations]{On asymptotic behavior of solutions to random fractional Riesz-Bessel equations with cyclic long memory initial conditions}

\author{Maha Mosaad A.~Alghamdi}
\address{Department of Mathematical and Physical Sciences, La Trobe University,
Melbourne, VIC 3086, Australia\newline
\indent Department of Mathematics, College of Science and Humanities,
Imam Abdulrahman Bin Faisal University, Jubail 31441, Saudi Arabia}
\email{mmghamdi@iau.edu.sa}

\author{Andriy Olenko}
\address{Department of Mathematical and Physical Sciences, La Trobe University,
Melbourne, VIC 3086, Australia}
\email{a.olenko@latrobe.edu.au}

\subjclass[2020]{Primary 60F05, 60H15; Secondary 60G15, 60G60}

\date{\today}

\keywords{Fractional Riesz-Bessel equations, Random partial differential equations, Seasonal long memory, Spectral singularities, Multiscaling limit theorems}

\begin{abstract}
This paper investigates fractional Riesz–Bessel equations with random initial conditions. The spectra of these random initial conditions exhibit singularities both at zero frequency and at non-zero frequencies, which correspond to the cases of classical long-range dependence and cyclic long-range dependence, respectively. Using spectral methods and asymptotic theory, it is shown that the rescaled solutions of the equations converge to spatio-temporal Gaussian random fields. The limit fields are stationary in space and non-stationary in time. The covariance and spectral structures of the resulting asymptotic random fields are provided. The paper further establishes multiscaling limit theorems for the case of regularly varying asymptotics. Numerical example illustrating the theoretical results is also presented.
\end{abstract}

\maketitle

\section{Introduction}
Fractional diffusion and fractional kinetic equations generalize the classical heat and diffusion equations by replacing first- and second-order derivatives with operators of fractional order. Fractional partial differential equations employ nonlocal operators that alow capturing memory effects and long-range interactions, see \cite{gorenflo2020mittag}. Such models often arise in the study of diffusion in porous media with fractal geometry, viscoelastic materials, complex media, seismic wave propagation, anomalous diffusion, and turbulence. Foundational contributions to this field were made by \cite{Caputo1967}, while extensive theoretical frameworks for fractional derivatives, integrals, and operators were subsequently developed, see \cite{Oldham1974} and \cite{podlubny1998fractional}.

Gay and Heyde \cite{gay1990class} demonstrated that fractional Laplace operators generate random fields with long-range dependence. Further analyzes by \cite{Angulo_Ruiz-Medina_Anh_Grecksch_2000} and \cite{Anh1999} focused on stochastic heat equations incorporating fractional Laplace operators. For random initial data exhibiting a singularity of the spectrum at the origin, which corresponds to long-range dependence, Anh and Leonenko \cite{Anh1999NonGaussianSF} proved that suitably rescaled solutions converge to non-Gaussian limits. These results established connections between fractional operators, spectral singularities, and limit theorems.

In this paper, we study fractional partial differential equations with random initial conditions. There is a substantial literature on this topic for non-fractional equations.

Random fields constructed by appropriately rescaling solutions of classical diffusion equations with random initial conditions, were studied by \cite{Albeverio1994} and \cite{Leonenko1998ScalingLO}. The Burgers equation with random forcing, which is connected to the heat equation through the Cole–Hopf transform, has been analyzed in this framework by  \cite{Leonenko1999LimitTF} and \cite{Leonenko1998ExactPA}. The publications \cite{ anh2021fractional,BKLO1,BKLO} studied a hyperbolic diffusion model on the unit sphere and obtained exact series representations for the solution of the Cauchy problem with random initial conditions. More recently, \cite{Broadbridge2024} considered the Cauchy problem for random diffusion in an expanding space–time framework. In addition to deriving probabilistic properties of the solutions, they quantified their extremal behavior by establishing upper bounds for the probabilities of large deviations. Furthermore, Beghin et al. \cite{beghin2000} investigated scaling regimes associated with the Airy and Korteweg–de Vries equations. Anh and Leonenko~\cite{ ANH2000239,Anh2002RenormalizationAH} developed renormalization and homogenization techniques for fractional-in-time and fractional-in-space diffusion equations with random inputs. Their results provided non-Gaussian central limit theorems and novel classes of limit random fields and motivated further studies of scaling laws and multiscaling behavior, see \cite{alghamdi2025multiscalingasymptoticbehaviorsolutions,  alghamdi2024, Anh1999NonGaussianSF, ANH2000239, leonenko2024fractional}.

Many results of the modern asymptotic theory use regular variation to generalise classical limit theorems. Regular varying functions, see \cite{Bingham_Goldie_Teugels, seneta2006regularly} provide wide, flexible deviations around power laws. Such results are also of great importance in statistics, as for modeling of real data, the exact mathematical power laws are very strict assumptions. For classical long-memory models, the corresponding results and methodology are relatively well developed, see, for example, \cite{anh2017rate,Leonenko2013SojournMO} and the references therein. However, for cyclic long-range dependent models, the singularity location usually changes when studying asymptotic changes, which requires new methods of investigation.

The paper presents several long-memory scenarios of random initial conditions for fractional Riesz–Bessel equations. The corresponding limit theorems are proven and discussed.

The article has the following structure.
Section \ref{defch5} introduces the main definitions and notation. Section \ref{sec3ch5} presents the main results concerning fractional Riesz–Bessel equations with random initial conditions. The spectra of these initial conditions exhibit singularities both at zero frequency and at nonzero frequencies, corresponding to classical long-range dependence and cyclic long-range dependence, respectively. Section \ref{sec4ch5} establishes multiscaling limit theorems for the case of regularly varying asymptotics. The paper also includes a numerical example illustrating the obtained results. Future research directions are discussed in Section \ref{conch5}.

All numerical computations and plotting in this paper were performed using the software Maple 2023. The corresponding Maple code is freely available in the folder
”Research materials” from the website \url{https://sites.google.com/site/olenkoandriy/}.

\section{ Definitions and notations} \label{defch5}
This section provides the main notations and background material required in the following sections.

Let $u(t,x)$ be a real-valued function of the two arguments $x \in \mathbb{R}^d$ and $t>0.$ The time derivative of order $\beta \in (0,1]$ is defined as follows:
\begin{equation*} 
\frac{\partial^{\beta} u(t,x)}{\partial t^{\beta}} =
\begin{cases}
\dfrac{\partial u}{\partial t}(t,x), & \text{if } \beta = 1, \\[10pt]
\left( \mathcal{D}_t^{\beta} u \right)(t,x), & \text{if } \beta \in (0,1),
\end{cases}
\end{equation*}
where
\[
\left( \mathcal{D}_t^{\beta} u \right)(t,x)
= \frac{1}{\Gamma(1 - \beta)}
\left[
\frac{\partial}{\partial t} \int_0^t (t - \tau)^{-\beta} u(\tau, x)\, d\tau
- \frac{u(0,x)}{t^{\beta}}
\right],
\qquad t>0,
\]
is the regularized fractional derivative in the Caputo–Djrbashian sense, see, for example,  \cite[(2.138)]{podlubny1998fractional}.

The publications \cite{Anh1999NonGaussianSF, ANH2000239} considered fractional Riesz–Bessel equations (FRBE)
\begin{equation}\label{modch5}
\frac{\partial^\beta u(t,x)}{\partial t^\beta} = -\mu (I - \Delta)^{\gamma/2} (-\Delta)^{\alpha/2} u(t,x), \quad t > 0,
\end{equation}
subject to random initial condition
\begin{equation}\label{modCch5}
u(0,x) = \xi(x),
\end{equation}
where $\alpha \geq 0, \, \gamma>0, \, \mu >0,$ $\xi(x)$,  $x \in \mathbb{R}^d$ is a measurable Gaussian random field on the probability space $(\Omega, \mathcal{F}, P)$ with mean zero, and $\Delta$ is the $d$-dimensional Laplace operator. The operators $-(I-\Delta)^{\gamma/2}$  and $(-\Delta)^{\alpha/2}$  are interpreted as the inverse operators of Bessel and Riesz potentials, respectively, see, for example, \cite{Anh1999NonGaussianSF, ANH2000239}. The random field $u(t,x)$ is interpreted as a mean-square solution of the initial value problem (\ref{modch5})-(\ref{modCch5}).

For the sake of simplicity, in this paper, we focus our attention on the one-dimensional case of $d=1.$ Then, $\Delta u(x)=\frac{\partial^{2}u(x)}{\partial x^2}$.
The covariance function of $\xi(x)$ can be written as
\begin{equation}\label{covch5}
 r(x)=\mathrm{Cov}(\xi(0), \xi(x)) = \int_{\mathbb{R}} e^{i  x \lambda } F(d\lambda),
\end{equation} where $F(\cdot)$ is the spectral measure.

If it can be represented as
\[F(\Delta)=\int_{\Delta} f_{\xi}(\lambda)d\lambda,\quad \Delta\in \mathcal{B}(\mathbb R), \] where $\mathcal{B}(\mathbb R)$ is $\sigma$ field of Borel sets of $\mathbb{R}$, then the function $f_{\xi}(\lambda), \lambda\in \mathbb R,$ which is integrable over $\mathbb R$, is called the spectral
density function of the statioray process $\xi(\cdot)$.
Then, the following spectral representation of the random process $\xi(x)$ holds true
\[
\xi(x) = \int_{\mathbb {R}}e^{i \lambda x } Z(d\lambda)=\int_{\mathbb {R}}e^{i \lambda x } \sqrt{f_{\xi}(\lambda)}W(d\lambda),
\]
 where $Z(\cdot)$ and $W(\cdot)$ denote the random measure and the white-noise random measure on $\mathbb{R}$, respectively. Note that, $\mathbb{E}|Z(d\lambda)|^2 = F(d\lambda).$

One can obtain the following spectral representation of the solution of the initial value problem (\ref{modch5})-(\ref{modCch5}), see \cite{ANH200377},
\begin{equation}\label{solution ch5}
    u(t,x) = \int_{\mathbb{R}} e^{i x \lambda} E_\beta(-\mu |\lambda|^{\alpha} (1+|\lambda|^2)^{\gamma/2} t^\beta) Z(d\lambda),
\end{equation}
where $E_\beta(\cdot)$ is the Mittag-Leffler function defined as
\[
E_\beta(s) := \sum_{k=0}^{\infty} \frac{s^k}{\Gamma(1 + \beta k)}, \qquad s \in \mathbb{R},\ 0 < \beta < 1.
\]
 For the negative values of its argument, it satisfies the inequality
\begin{equation}\label{upperch5}
   \frac{1}{1 + \Gamma(1-\beta)s} \leq E_\beta(-s) \leq\frac{1}{1 + \frac{s}{\Gamma(1+\beta)}}, \quad s\geq 0.
\end{equation}
For more details about the Mittag-Leffler function and its properties, see \cite{mainardi2014some, Simon2013ComparingFA}.

The covariance function of the solution field $u(t, x)$ is
\begin{align*}
    \mathrm{Cov}(u(t, x), u(t', x')) &= \int_{\mathbb{R}} e^{i
\lambda (x-x')} E_\beta(-\mu |\lambda|^{\alpha} (1+|\lambda|^2)^{\gamma/2} t^\beta)\\\nonumber&\times  E_\beta(-\mu |\lambda|^{\alpha} (1+|\lambda|^2)^{\gamma/2} (t')^\beta)  F(d\lambda).
\end{align*}

To introduce the class of random processes $\xi(x),$ $x \in \mathbb R,$ used as the initial condition~\eqref{modCch5}, the following assumption is used, see \cite{alghamdi2024} for more details.

\begin{assumption}\label{Asumptionmain}
    The covariance function (\ref{covch5}) has the form
\begin{align}\label{covariance of our problem11ch5}
    &r(x)=\sum_{j=0}^{n}\frac{\cos(w_{j}x)}{(1+x^{2})^{\kappa_j/2}}A_{j},\quad x\in\mathbb R,
\end{align}
where $\sum_{j=0}^{n}A_{j}=1, \, w_{0}=0,$ $ w_{j}>0,\,\kappa_j\in(0,1),\, j=1,...,n.
    $
\end{assumption}

The covariance function in \eqref{covariance of our problem11ch5} is non-integrable and has an oscillating behavior, which corresponds to the cyclic long-range dependence scenario.
It follows from (\ref{covariance of our problem11ch5}) that the corresponding spectral density has the representation
\begin{align*}
  &f(\lambda):=\sum_{j=1}^{n}\frac{c_1(\kappa_j)}{2}A_{j} \Big(K_{\frac{\kappa_j-1}{2}}\left(|\lambda+w_j| \right)|\lambda+w_j|^{\frac{\kappa_j-1}{2}}+ K_{\frac{\kappa_j-1}{2}}\left(|\lambda-w_j| \right)|\lambda-w_j|^{\frac{\kappa_j-1}{2}}\Big)\nonumber\\&+\frac{c_{1}(\kappa_0)}{2}A_{0}K_{\frac{\kappa_{0}-1}{2}}(|\lambda|)|\lambda|^{\frac{\kappa_{0}-1}{2}}=  \sum_{j=1}^{n}\frac{c_2(\kappa_j)}{2}A_j\left(\frac{1-\theta_{\kappa_j}(|\lambda+w_j|)}{|\lambda+w_j|^{1-\kappa_j}}+\frac{1-\theta_{\kappa_j}(|\lambda-w_j|)}{|\lambda-w_j|^{1-\kappa_j}}\right)\nonumber
  \end{align*}
  \begin{equation}\label{spectralch5}
 \hspace*{-4cm} +\frac{c_{2}(\kappa_0)}{2}A_{0}\frac{1-\theta_{\kappa_0(|\lambda|)}}{|\lambda|^{1-\kappa_0}}=\sum_{j=0}^{n}f_{\kappa_j,w_j}(\lambda),
  \end{equation}
where $c_1(\kappa_j):=2^{\mathbf{1}_{\{0\}}(j)}{2^{\frac{1-\kappa_j}{2}}}/\left({\sqrt{\pi}{\Gamma\left({\kappa_j}/{2} \right)}}\right)$, $c_{2}(\kappa_j):=\left(2^{1-\mathbf{1}_{\{0\}}(j)}\Gamma(\kappa_j)\cos\left({\kappa_{j}\pi}/{2}\right)\right)^{-1}$, $\mathbf{1}_{A}(\cdot)$ is the indicator function of a set $A,$ and $K_\nu(\cdot)$ is the modified Bessel function of the second kind  \[K_\nu(z)=\frac{1}{2}\int_{0}^{\infty}s^{\nu-1}\exp\left(-\frac{1}{2}\left(s+\frac{1}{s} \right)z\right)ds,\quad z\geq 0,\quad \nu\in \mathbb {R}.\]
The functions $\theta_{\kappa_j}(\cdot)$ are defined as
\begin{equation}\label{thetaj} \theta_{\kappa_j}(|\lambda|)=1-\frac{c_1(\kappa_j)}{c_2(\kappa_j)}K_{\frac{\kappa_j-1}{2}}(|\lambda|)|\lambda|^{\frac{1-\kappa_j}{2}}.\end{equation}
Let us introduce the values $w_{-j}=-w_{j},\, \kappa_{-j}=\kappa_{j},\, j=1,...,n.$ Then, the spectral density~\eqref{spectralch5} can be rewritten in the consise form
\begin{equation}\label{spectral density Ch5}
    f(\lambda)=\sum_{j=-n}^{n}\frac{c_{2}(\kappa_j)}{2}A_{j}\frac{1-\theta_{\kappa_j}(|\lambda+w_j|)}{|\lambda+w_j|^{1-\kappa_j}}.
\end{equation}

The spectral density has singularities at the origin and the points $\pm w_j$, $j=1, ..., n,$ with power-law types $|\lambda\pm w_{j}|^{\kappa_{j}-1}$.
For any $\kappa_j \in (0,1)$, the function $\theta_{\kappa_j}(\lambda)$ is bounded and satisfies $|\theta_{\kappa_j}(\lambda)| \leq 1$ for all $\lambda \in \mathbb{R}$.  Moreover, the following asymptotic holds
\begin{align*}
     \theta_{\kappa_j}(|\lambda|) = \frac{\Gamma\left(\frac{\kappa_j + 1}{2} \right)}{2^{1 - \kappa_j} \, \Gamma\left(\frac{3 - \kappa_j}{2} \right)} \left| \lambda \right|^{1 - \kappa_j}
- \frac{|\lambda|^2}{2(\kappa_j + 1)}  + o(|\lambda|^2), \quad |\lambda| \to 0.
\end{align*}
At infinity, the components $f_{\kappa_j,w_j}(\cdot)$ of the spectral density (\ref{spectralch5}) exhibit the following limit behavior
\begin{align*}
   f_{\kappa_j,w_j}(\lambda)\sim   \frac{A_j\sqrt{\pi}c_{1}(\kappa_j) (e^{w_j}+e^{-w_j})}{2^{1/2 +\mathbf{1}_{\{0\}}(j)}|\lambda|^{1-\kappa_j/2}}
e^{-|\lambda|}, \quad |\lambda| \to \infty.
\end{align*}

The spectral density $f(\cdot)$ is an even function. We assume that $W(\cdot)$ is a symmetric random measure. Hence, all random processes considered thereafter are real-valued.

\section{Multiscaling limits for fractional Riesz–Bessel equations}\label{sec3ch5}

This section examines the asymptotic behavior of FRBEs given by (\ref{modch5}) with initial random conditions defined by (\ref{modCch5}) for the cases of classical long-memory and cyclic long-memory behavior.

First, the result for only the cyclic long-memory case is presented.
\begin{theorem}\label{the1ch5} Consider the random field $u(t,x),$ $t>0,$ $x\in\mathbb R,$ defined by ~{\rm (\ref{modch5})} with  $\alpha>1/2$, and the random initial condition {\rm(\ref{modCch5})}. Let $\xi (x),$ $x\in \mathbb R,$ have a covariance function satisfying Assumption~{\rm\ref{Asumptionmain}} with $A_{0}=0$. If $\varepsilon\to 0,$ then the finite-dimensional distributions of the random fields
\begin{equation*}
    U_{\varepsilon}(t,x)={\varepsilon^{-\beta/2\alpha}}u\left(\frac{t}{\varepsilon},\frac{x}{\varepsilon^{\beta/\alpha}}\right)
\end{equation*}
converge weakly to the finite-dimensional distributions of the zero-mean Gaussian random field defined by
\begin{equation}\label{U_{01}ch5}
    U_{0}(t,x)=\sqrt{\sum_{j=1}^{n}c_1(\kappa_j)A_{j}K_{\frac{\kappa_j-1}{2}}(|w_j|)|w_j|^{\frac{\kappa_j-1}{2}}}\int_{\mathbb R}e^{i\lambda x} E_{\beta}\left(-\mu t^{\beta}|\lambda|^{\alpha}\right)W(d\lambda),
\end{equation}
with the covariance function
\begin{align}
  {\rm Cov}\left(U_{0}(t,x), U_{0}(t',x')\right)&=\sum_{j=1}^{n}2c_1(\kappa_j)A_{j}K_{\frac{\kappa_j-1}{2}}(|w_j|)|w_j|^{\frac{\kappa_j-1}{2}}\nonumber\\
  &\times\int_{0}^{
  +\infty}\cos\lambda(x-x')E_{\beta}(-\mu t^{\beta}|\lambda|^{\alpha})E_{\beta}(-\mu (t')^{\beta}|\lambda|^{\alpha})d\lambda.\label{cov of Bessel ch5}
\end{align}
\end{theorem}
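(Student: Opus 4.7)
The plan exploits the Gaussianity of the setting. Since $U_\varepsilon(t,x)$ is a one-dimensional Wiener integral of a deterministic kernel against the Gaussian random measure $Z(\cdot)$, and the same holds for $U_0(t,x)$, both random fields have zero-mean Gaussian finite-dimensional distributions. Consequently, weak convergence of the f.d.d.\ of $U_\varepsilon$ to those of $U_0$ reduces to the convergence of all pairwise covariances, namely
\[
\mathrm{Cov}\bigl(U_\varepsilon(t,x), U_\varepsilon(t',x')\bigr) \to \mathrm{Cov}\bigl(U_0(t,x), U_0(t',x')\bigr) \quad \text{as } \varepsilon\to 0,
\]
for every fixed $t,t'>0$ and $x,x'\in\mathbb{R}$.

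Starting from the spectral representation (\ref{solution ch5}), It\^o's isometry gives the left-hand side as $\varepsilon^{-\beta/\alpha}$ times a spectral integral involving $E_\beta(-\mu|\lambda|^\alpha(1+|\lambda|^2)^{\gamma/2}t^\beta\varepsilon^{-\beta})$ and $f(\lambda)$. The natural change of variables $\lambda=\varepsilon^{\beta/\alpha}\nu$ absorbs the prefactor $\varepsilon^{-\beta/\alpha}$ and recasts the covariance as
\[
\int_{\mathbb{R}} e^{i(x-x')\nu} E_\beta\bigl(-\mu t^\beta|\nu|^\alpha g_\varepsilon(\nu)\bigr) E_\beta\bigl(-\mu (t')^\beta|\nu|^\alpha g_\varepsilon(\nu)\bigr) f(\varepsilon^{\beta/\alpha}\nu)\,d\nu,
\]
where $g_\varepsilon(\nu):=(1+\varepsilon^{2\beta/\alpha}|\nu|^2)^{\gamma/2}\to 1$ pointwise. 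Continuity of the Mittag-Leffler function and of $f$ at the origin (which is safely inside the spectrum thanks to $A_0=0$ and $\pm w_j\neq 0$) delivers pointwise convergence of the integrand to $e^{i(x-x')\nu}E_\beta(-\mu t^\beta|\nu|^\alpha)E_\beta(-\mu (t')^\beta|\nu|^\alpha)f(0)$. A short calculation using (\ref{thetaj}) identifies
\[
f(0)=\sum_{j=1}^n c_1(\kappa_j)\,A_j\, K_{(\kappa_j-1)/2}(|w_j|)\,|w_j|^{(\kappa_j-1)/2},
\]
which is precisely the square of the scalar constant in (\ref{U_{01}ch5}); taking real parts and using the evenness of the remaining integrand converts $e^{i(x-x')\nu}$ into $\cos((x-x')\nu)$, reproducing (\ref{cov of Bessel ch5}).

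The main technical obstacle is justifying passage to the limit under the integral, because the singularities of $f(\varepsilon^{\beta/\alpha}\nu)$ sit at $\nu=\pm w_j\varepsilon^{-\beta/\alpha}$ and drift to infinity as $\varepsilon\to 0$, precluding a single globally $\varepsilon$-uniform integrable majorant. I plan to handle this by a truncation argument: fix $M>0$ and split at $|\nu|=M$. The upper bound (\ref{upperch5}), combined with $g_\varepsilon(\nu)\ge 1$, yields
\[
\bigl|E_\beta(-\mu t^\beta|\nu|^\alpha g_\varepsilon(\nu))\,E_\beta(-\mu (t')^\beta|\nu|^\alpha g_\varepsilon(\nu))\bigr| \le \frac{\Gamma(1+\beta)^2}{(1+\mu t^\beta|\nu|^\alpha)(1+\mu (t')^\beta|\nu|^\alpha)},
\]
so that on $|\nu|\le M$, for all sufficiently small $\varepsilon$, the point $\varepsilon^{\beta/\alpha}\nu$ lies in a fixed compact neighborhood of the origin on which $f$ is bounded, and the dominated convergence theorem applies on this region. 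For the tail $|\nu|>M$, reverting to the variable $\lambda=\varepsilon^{\beta/\alpha}\nu$ and inserting the decay $E_\beta^2\le C/|\nu|^{2\alpha}$ leads to an estimate of order $M^{1-2\alpha}$ uniformly in small $\varepsilon$; here the hypothesis $\alpha>1/2$ is essential both for the integrability of the limiting integrand at infinity and for the smallness of the tail as $M\to\infty$. Sending first $\varepsilon\to 0$ and then $M\to\infty$ closes the argument and yields the stated limit covariance (\ref{cov of Bessel ch5}), completing the proof.
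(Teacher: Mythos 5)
Your proposal is correct, and it takes a genuinely different route from the paper. The paper couples $U_\varepsilon$ and $U_0$ on the same probability space (same white noise $W$ after the change of variables $\lambda=\tilde\lambda\varepsilon^{\beta/\alpha}$ and Brownian scaling) and proves $\mathbb{E}\vert U_\varepsilon(t,x)-U_0(t,x)\vert^2\to 0$ via the generalized (Pratt-type) dominated convergence theorem applied to $\int E_\beta^2(-\mu t^\beta\vert\lambda\vert^\alpha)\bigl(Q_{\varepsilon^{\beta/\alpha}}(\lambda)-f^{1/2}(0)\bigr)^2 d\lambda$; this yields $L^2$-convergence of the coupled fields, which is stronger than what the theorem asserts. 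You instead exploit Gaussianity to reduce f.d.d.\ convergence to convergence of pairwise covariances, and then run a truncation argument ($|\nu|\le M$ versus $|\nu|>M$) directly on the spectral integral for the covariance. Both proofs hinge on the same ingredients: the rescaling that turns $E_\beta(-\mu|\lambda|^\alpha(1+|\lambda|^2)^{\gamma/2}(t/\varepsilon)^\beta)$ into $E_\beta(-\mu t^\beta|\nu|^\alpha g_\varepsilon(\nu))$, continuity of $f$ at $0$ when $A_0=0$, the identification $f(0)=\sum_{j=1}^n c_1(\kappa_j)A_jK_{(\kappa_j-1)/2}(|w_j|)|w_j|^{(\kappa_j-1)/2}$, and the bound \eqref{upperch5} with $\alpha>1/2$ controlling the tail. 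Your approach is more elementary (ordinary DCT on a compact region plus an explicit tail bound, no need for Pratt's lemma or for matching upper and lower bounds on $I_1$), at the cost of delivering only distributional rather than mean-square convergence. One small point to tighten: after reverting to $\lambda=\varepsilon^{\beta/\alpha}\nu$ in the tail, you should split the $\lambda$-integral at a fixed $\rho<\min_j w_j$; the portion $M\varepsilon^{\beta/\alpha}<|\lambda|\le\rho$ (where $f$ is bounded) gives the claimed $O(M^{1-2\alpha})$ uniformly in $\varepsilon$, while the portion $|\lambda|>\rho$ is $O\bigl(\varepsilon^{(2\alpha-1)\beta/\alpha}\bigr)$ by integrability of $f$ and hence vanishes as $\varepsilon\to 0$ — so the tail is not purely $O(M^{1-2\alpha})$ uniformly, but the iterated limit $\varepsilon\to 0$ then $M\to\infty$ still closes exactly as you intend.
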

\begin{proof}
    The proof combines the methods from  \cite{alghamdi2025multiscalingasymptoticbehaviorsolutions} and \cite{alghamdi2024}. By \eqref{solution ch5}, it holds in the sense of the finite-dimensional distributions that
    \begin{equation*}
        U_{\varepsilon}(t,x)=\frac{1}{\varepsilon^{\beta/2\alpha}}\int_{\mathbb R}e^{i\lambda x/\varepsilon^{\beta/\alpha}}E_{\beta}\left(-\mu |\lambda|^{\alpha}(1+|\lambda|^{2})^{\frac{\gamma}{2}}\left(\frac{t}{\varepsilon} \right)^{\beta} \right)\sqrt{f(\lambda)}W(d\lambda).
    \end{equation*}
    Note, that by \eqref{thetaj}
     \begin{equation*}
       c_{1}(\kappa_{j})K_{\frac{\kappa_{j}-1}{2}}(|\lambda|)|\lambda|^{\frac{\kappa_{j}-1}{2}}=c_2(\kappa_j)\frac{1-{\theta_{\kappa_j}(|\lambda|)}}{|\lambda|^{1-\kappa_j}}, \, \lambda\in \mathbb{R}.
   \end{equation*}
    By \eqref{spectralch5}, the change of variable $\lambda=\tilde{\lambda}\varepsilon^{\beta/\alpha}$, and using the Brownian scaling property, \\$W(d(\tilde{\lambda}\varepsilon^{\beta/\alpha}))\overset{d}{=}\varepsilon^{\beta/2\alpha}W(d\tilde{\lambda}),$ one obtains
    \begin{align}\label{U1}
        U_{\varepsilon}(t,x)&\overset{d}{=}\frac{1}{\varepsilon^{\beta/2\alpha}}\int_{\mathbb R}e^{i\tilde{\lambda}x} E_{\beta}\left(-\mu |\tilde{\lambda}\varepsilon^{\beta/\alpha}|^{\alpha}(1+|\tilde{\lambda}\varepsilon^{\beta/\alpha}|^{2})^{\gamma/2}\left(\frac{t}{\varepsilon} \right)^{\beta}\right)\nonumber\\
        &\times\sqrt{f(\tilde{\lambda}\varepsilon^{\beta/\alpha})}W(d\tilde{\lambda}\varepsilon^{\beta/\alpha})\overset{d}{=}\int_{\mathbb R}e^{i\tilde{\lambda}x}E_{\beta}\left(-\mu |\tilde{\lambda}|^{\alpha}(1+|\tilde{\lambda}\varepsilon^{\beta/\alpha}|^{2})^{\gamma/2}t^{\beta}\right)\nonumber\\
        &\times \sqrt{\sum_{j=-n}^{n}A_{j}\frac{c_{2}(\kappa_j)}{2}\frac{1-\theta_{\kappa_j}(|\tilde{\lambda}\varepsilon^{\beta/\alpha}+w_j|)}{|\tilde{\lambda}\varepsilon^{\beta/\alpha}+w_j|^{1-\kappa_j}}}W(d\tilde{\lambda}).
    \end{align}

Then, by   (\ref{U_{01}ch5}) and (\ref{U1})
\begin{align}\label{difch5}
    R(t,x)&=\mathbb E(U_{\varepsilon}(t,x)-U_{0}(t,x))^{2}=\mathbb E\Bigg(\int_{\mathbb R}e^{i{\lambda}x}E_{\beta}\left(-\mu |{\lambda}|^{\alpha}(1+|{\lambda}\varepsilon^{\beta/\alpha}|^{2})^{\gamma/2}t^{\beta}\right)\nonumber\\
        &\times \sqrt{\sum_{j=-n}^{n}A_{j}\frac{c_{2}(\kappa_j)}{2}\frac{1-\theta_{\kappa_j}(|{\lambda}\varepsilon^{\beta/\alpha}+w_j|)}{|{\lambda}\varepsilon^{\beta/\alpha}+w_j|^{1-\kappa_j}}}W(d{\lambda})\nonumber\\&- \sqrt{\sum_{j=1}^{n}A_{j}\frac{c_{2}(\kappa_{j})}{|w_{j}|^{1-\kappa_{j}}}(1-\theta_{\kappa_j}(|w_j|))}\int_{\mathbb R}e^{i\lambda x} E_{\beta}\left(-\mu t^{\beta}|\lambda|^{\alpha}\right)W(d\lambda)\Bigg)^{2}\nonumber
        \\&=\int_{\mathbb R}E^{2}_{\beta}(-\mu t^{\beta}|\lambda|^{\alpha})\left( {Q}_{\varepsilon^{\beta/\alpha}}(\lambda)-\sqrt{\sum_{j=1}^{n}A_{j}\frac{c_{2}(\kappa_{j})}{|w_{j}|^{1-\kappa_{j}}}(1-\theta_{\kappa_j}(|w_j|))}\right)^{2}d\lambda,
\end{align}
    where
\begin{align*}
    {Q}_{\varepsilon^{\beta/\alpha}}&(\lambda):=\frac{ E_{\beta}\left(-\mu|\lambda |^{\alpha}(1+|\lambda\varepsilon^{\beta/\alpha}|^{2})^{\gamma/2}t^{\beta}\right)}{ E_{\beta}(-\mu t^{\beta}|\lambda|^{\alpha})}\sqrt{\sum_{j=-n}^{n}A_{j}\frac{c_{2}(\kappa_j)}{2}\frac{1-\theta_{\kappa_j}(|\lambda\varepsilon^{\beta/\alpha}+w_j|)}{|\lambda\varepsilon^{\beta/\alpha}+w_j|^{1-\kappa_j}}}.
\end{align*}
Note that for each fixed $\lambda$
\begin{equation}\label{Qch5}
    \lim_{\varepsilon\to 0} {Q}_{\varepsilon^{\beta/\alpha}}(\lambda)=\sqrt{\sum_{j=1}^{n}A_{j}\frac{c_{2}(\kappa_{j})}{|w_{j}|^{1-\kappa_{j}}}(1-\theta_{\kappa_j}(|w_j|))}=f^{1/2}(0).
\end{equation}
Therefore, the integrand in \eqref{difch5} converges pointwise to zero. As it holds \[\left({Q}_{\varepsilon^{\beta/\alpha}}(\lambda)-\sqrt{\sum_{j=1}^{n}\frac{A_{j}c_{2}(\kappa_{j})}{|w_{j}|^{1-\kappa_{j}}}(1-\theta_{\kappa_j}(|w_j|))}\right)^{2} \leq {Q}^{2}_{\varepsilon^{\beta/\alpha}}(\lambda)+\sum_{j=1}^{n}\frac{A_{j}c_{2}(\kappa_{j})}{|w_{j}|^{1-\kappa_{j}}}(1-\theta_{\kappa_j}(|w_j|)),\]  one can apply the generalized Lebesgue's dominated convergence theorem.

To justify its conditions, one needs to show that the limit
\begin{equation}\label{justify condition on Besselch5}
    \begin{split}
        & \lim_{\varepsilon\to 0}\int_{\mathbb R} E_{\beta}^{2}(-\mu t^{\beta}|\lambda|^{\alpha})\left(Q^2_{\varepsilon^{\beta/\alpha}}(\lambda)+\sum_{j=1}^{n}A_{j}\frac{c_{2}(\kappa_{j})}{|w_{j}|^{1-\kappa_{j}}}(1-\theta_{\kappa_j}(|w_j|))\right)d\lambda\\&=\int_{\mathbb R} E_{\beta}^{2}(-\mu t^{\beta}|\lambda|^{\alpha})\lim_{\varepsilon\to 0}\left(Q^2_{\varepsilon^{\beta/\alpha}}(\lambda)+\sum_{j=1}^{n}A_{j}\frac{c_{2}(\kappa_{j})}{|w_{j}|^{1-\kappa_{j}}}(1-\theta_{\kappa_j}(|w_j|))\right)d\lambda< \infty.
    \end{split}
\end{equation}

The boundedness of the last term in (\ref{justify condition on Besselch5}) follows from \eqref{Qch5} and the boundedness of the spectral density at zero
\begin{align}\label{finite1ch5}
         &\int_{\mathbb R} E_{\beta}^{2}(-\mu t^{\beta}|\lambda|^{\alpha})\lim_{\varepsilon\to 0}\left(Q^2_{\varepsilon^{\beta/\alpha}}(\lambda)+\sum_{j=1}^{n}A_{j}\frac{c_{2}(\kappa_{j})}{|w_{j}|^{1-\kappa_{j}}}(1-\theta_{\kappa_j}(|w_j|))\right)d\lambda\nonumber\\&=2f(0)\int_{\mathbb R} E_{\beta}^{2}(-\mu t^{\beta}|\lambda|^{\alpha})d\lambda<\infty.
\end{align}

The integral in (\ref{finite1ch5}) is finite if $\alpha>1/2$. The condition $\alpha>1/2$ is sufficient and necessary for the boundedness of the integral, see the proof of \cite[equation (30)]{alghamdi2024}.

The first integral in (\ref{justify condition on Besselch5}) can be written as
\begin{align}\label{3Ich5}&
\int_{\mathbb{R}} E_{\beta}^{2}\!\bigl(-\mu t^{\beta}|\lambda|^{\alpha}\bigr)\Bigl(Q^{2}_{\varepsilon^{\beta/\alpha}}(\lambda)+\sum_{j=1}^{n} A_j \frac{c_{2}(\kappa_j)}{|w_j|^{1-\kappa_j}}\bigl(1-\theta_{\kappa_j}(|w_j|)\bigr)\Bigr)\,d\lambda \nonumber\\
&=
\int_{\mathbb{R}}
E^{2}_{\beta}\!\left(-\mu|\lambda|^{\alpha}\bigl(1+|\lambda\varepsilon^{\beta/\alpha}|^{2}\bigr)^{\gamma/2}t^{\beta}\right)
\sum_{j=-n}^{n}A_j\frac{c_2(\kappa_j)(1-\theta_{\kappa_j}(|\lambda\varepsilon^{\beta/\alpha}+w_j|))}{2|\lambda\varepsilon^{\beta/\alpha}+w_j|^{1-\kappa_j}}\,d\lambda\nonumber\\&
+\int_{\mathbb{R}} E_{\beta}^{2}\!\bigl(-\mu t^{\beta}|\lambda|^{\alpha}\bigl)\sum_{j=1}^{n}A_{j} c_{2}(\kappa_{j})\frac{1-{\theta_{\kappa_j}}(|w_{j}|)}{|w_{j}|^{1-\kappa_{j}}}\,d\lambda
 \nonumber\\
&=:
I_{1}(\varepsilon^{\beta/\alpha})+\int_{\mathbb{R}} E_{\beta}^{2}\!\bigl(-\mu t^{\beta}|\lambda|^{\alpha}\bigl)\sum_{j=1}^{n}A_{j} c_{2}(\kappa_{j})\frac{1-{\theta_{\kappa_j}}(|w_{j}|)}{|w_{j}|^{1-\kappa_{j}}}\,d\lambda.
\end{align}
Let us split the integration in $I_{1}(\varepsilon^{\beta/\alpha})$  into two regions, $|\lambda|\leq \varepsilon^{-\delta}$ and $|\lambda|> \varepsilon^{-\delta},$  where $\delta \in (\beta/(2\alpha^2),\beta/\alpha),$
\begin{align}\label{firstintegralch5}
  &
\int_{|\lambda|\leq \varepsilon^{-\delta}}
E^{2}_{\beta}\!\left(-\mu|\lambda|^{\alpha}\bigl(1+|\lambda\varepsilon^{\beta/\alpha}|^{2}\bigr)^{\gamma/2}t^{\beta}\right)
\sum_{j=-n}^{n}A_j\frac{c_2(\kappa_j)(1-\theta_{\kappa_j}(|\lambda\varepsilon^{\beta/\alpha}+w_j|))}{2|\lambda\varepsilon^{\beta/\alpha}+w_j|^{1-\kappa_j}}\,d\lambda\nonumber\\&+\int_{|\lambda|> \varepsilon^{-\delta}}
E^{2}_{\beta}\!\left(-\mu|\lambda|^{\alpha}\bigl(1+|\lambda\varepsilon^{\beta/\alpha}|^{2}\bigr)^{\gamma/2}t^{\beta}\right)
\sum_{j=-n}^{n}A_j\frac{c_2(\kappa_j)(1-\theta_{\kappa_j}(|\lambda\varepsilon^{\beta/\alpha}+w_j|))}{2|\lambda\varepsilon^{\beta/\alpha}+w_j|^{1-\kappa_j}}\,d\lambda.
\end{align}
Notice that $f(\lambda)$ is an even function, which increases for positive $\lambda$ in a neighbourhood of $0.$
By the complete monotonicity decreasing property of the Mittag-Leffler functions of negative arguments for $\beta\in(0,1]$, see \cite[Proposition 3.10]{gorenflo2020mittag},  one obtains lower and upper bounds for the first integral in (\ref{firstintegralch5})
\begin{equation}\label{lboundch5}
\sum_{j=-n}^{n} A_j \frac{c_{2}(\kappa_j)(1-\theta_{\kappa_j}(|w_j|))}{2|w_j|^{1-\kappa_j}}
\int_{|\lambda|\leq \varepsilon^{-\delta}} E^{2}_{\beta}(-\mu |\lambda|^{\alpha}(1+|\lambda\varepsilon^{\beta/\alpha}|^{2})^{\gamma/2}t^{\beta} )d\lambda
\end{equation}
\begin{align}\le & \int_{|\lambda|\leq \varepsilon^{-\delta}} E^{2}_{\beta}(-\mu |\lambda|^{\alpha}(1+|\lambda\varepsilon^{\beta/\alpha}|^{2})^{\gamma/2}t^{\beta} )\sum_{j=-n}^{n} A_j\frac{c_{2}(\kappa_j)(1-\theta_{\kappa_j}(|\lambda\varepsilon^{\beta/\alpha}+w_j|))}{2|\lambda\varepsilon^{\beta/\alpha}+w_j|^{1-\kappa_j}}d\lambda \nonumber\\
 \le&\int_{|\lambda|\leq \varepsilon^{-\delta}} E^{2}_{\beta}(-\mu |\lambda|^{\alpha}t^{\beta} )\sum_{j=-n}^{n} A_j\frac{c_{2}(\kappa_j)(1-\theta_{\kappa_j}(|\lambda\varepsilon^{\beta/\alpha}+w_j|))}{2|\lambda\varepsilon^{\beta/\alpha}+w_j|^{1-\kappa_j}}d\lambda \nonumber\\
 \le& \sum_{j=-n}^{n} A_j
\frac{c_{2}(\kappa_j)(1-\theta_{\kappa_j}(|\varepsilon^{\beta/\alpha-\delta}+w_j|))}{2|\varepsilon^{\beta/\alpha-\delta}+w_j|^{1-\kappa_j}}\int_{|\lambda|\leq \varepsilon^{-\delta}} E_{\beta}^{2}(-\mu |\lambda|^{\alpha}t^{\beta} )d\lambda\nonumber.
\end{align}

Then,
\begin{align}\label{1Ifirst ch5}
    \lim_{\varepsilon\to 0}&\sum_{j=-n}^{n} A_j \frac{c_{2}(\kappa_j)(1-\theta_{\kappa_j}(|\varepsilon^{\beta/\alpha-\delta}
    +w_j|))}{2|\varepsilon^{\beta/\alpha-\delta}+w_j|^{1-\kappa_j}}\int_{|\lambda|\leq \varepsilon^{-\delta}} E_{\beta}^{2}(-\mu |\lambda|^{\alpha}t^{\beta} )d\lambda\nonumber\\&=\sum_{j=1}^{n} A_j \frac{c_{2}(\kappa_j)(1-\theta_{\kappa_j}(|w_j|))}{|w|^{1-\kappa_j}}\int_{\mathbb R} E_{\beta}^{2}(-\mu |\lambda|^{\alpha}t^{\beta} )d\lambda,
\end{align}
where, as was proved in (\ref{finite1ch5}),  the last integral is finite.

Hence, the first integral in (\ref{firstintegralch5}) is uniformly bounded. Noting that the integrands in~(\ref{lboundch5}) are bounded by $E_{\beta}^{2}(-\mu |\lambda|^{\alpha}t^{\beta} )$ and pointwise converge to this bound, by the dominated convergence theorem,  one obtains that the lower bound converges to the same values as the upper one.
Therefore, the first integral in (\ref{firstintegralch5}) converges to the value $\sum_{j=1}^{n} c_{2}(\kappa_j)A_j (1-\theta_{\kappa_j}(|w_j|))|w_j|^{\kappa_j-1}\int_{\mathbb R} E_{\beta}^{2}(-\mu |\lambda|^{\alpha}t^{\beta} )d\lambda.$

The second integral in (\ref{firstintegralch5}) can be estimated as
\begin{align}
    0&<\int_{|\lambda|> \varepsilon^{-\delta}}E^{2}_{\beta}(-\mu |\lambda|^{\alpha}(1+|\lambda\varepsilon^{\beta/\alpha}|^{2})^{\gamma/2}t^{\beta} )\sum_{j=-n}^{n} A_j \frac{c_{2}(\kappa_j)(1-\theta_{\kappa_j}(|\lambda\varepsilon^{\beta/\alpha}+w_j|))}{2|\lambda\varepsilon^{\beta/\alpha}+w_j|^{1-\kappa_j}}d\lambda\nonumber\\&\leq \frac{ E_{\beta}^{2}(-\mu \varepsilon^{-\delta\alpha}(1+|\varepsilon^{{\beta/\alpha}-\delta}|^{2})^{\gamma/2}{t}^{\beta}) }{ \varepsilon^{\beta/\alpha}}\int_{|\tilde{\lambda}|>\varepsilon^{{\beta/\alpha}-\delta}}\sum_{j=-n}^{n} A_j \frac{c_{2}(\kappa_j)(1-\theta_{\kappa_j}(|\tilde{\lambda}+w_j|))}{2|\tilde{\lambda}+w_j|^{1-\kappa_j}}d\tilde{\lambda}\nonumber
    \end{align}
    \begin{equation}\label{secondintegralch5}
        \leq  \frac{E_{\beta}^{2}(-\mu {t}^{\beta}\varepsilon^{-\delta\alpha}) }{ \varepsilon^{\beta/\alpha}}\int_{\mathbb R}\sum_{j=-n}^{n} A_j\frac{c_{2}(\kappa_j)(1-\theta_{\kappa_j}(|\tilde{\lambda}+w_j|))}{2|\tilde{\lambda}+w_j|^{1-\kappa_j}}d\tilde{\lambda}\to 0, \ \mbox{when}\ \varepsilon\to 0,
    \end{equation}
     as it follows from the upper bound in (\ref{upperch5}) and the condition $\delta \in (\beta/(2\alpha^2),\beta/\alpha)$ that
 \[ \frac{E_{\beta}^{2}(-\mu {t}^{\beta}\varepsilon^{-\delta\alpha}) }{ \varepsilon^{\beta/\alpha}}\le \frac{1}{ \varepsilon^{\beta/\alpha}+\mu^{2} {t}^{2\beta}\varepsilon^{\beta/\alpha-2\delta\alpha}/\Gamma^{2}(1+\beta) }\to 0, \quad \mbox{when}\ \varepsilon\to 0,  \]
and due to the integrability of the spectral density, the integral in (\ref{secondintegralch5}) is finite.

Hence, (\rm\ref{justify condition on Besselch5}) follows from the results (\rm\ref{3Ich5}), (\rm\ref{firstintegralch5}),  (\rm\ref{1Ifirst ch5}) and (\ref{secondintegralch5}).
 Thus, by the generalized dominated converges theorem $\lim_{\varepsilon \to 0}E{\vert U_{\varepsilon}(t,x)-U_{0}(t,x)\vert}^{2}=0,$ which implies the convergence of finite dimensional distributions.

 Finally, equation \eqref{cov of Bessel ch5} follows from \eqref{U_{01}ch5}, the isometry property of Wiener-It\^o stochastic integrals, and
 \begin{align}
     &{\rm Cov}(U_{0}(t,x),U_{0}(t',x'))=\mathbb E U_{0}(t,x)U_{\varepsilon}(t^{\prime},x^{\prime})\nonumber\\&=\sum_{j=1}^{n}c_1(\kappa_j)A_{j}K_{\frac{\kappa_j-1}{2}}(|w_j|)|w_j|^{\frac{\kappa_j-1}{2}}\int_{\mathbb {R}}e^{i\lambda(x-x^{\prime})}E_{\beta}\left(-\mu t^{\beta}|\lambda|^{\alpha}\right)E_{\beta}\left(-\mu (t')^\beta|\lambda|^{\alpha}\right)d\lambda\nonumber\\
     &=\sum_{j=1}^{n}2c_1(\kappa_j)A_{j}K_{\frac{\kappa_j-1}{2}}(|w_j|)|w_j|^{\frac{\kappa_j-1}{2}}\int_{0}^{
  +\infty}\cos\lambda(x-x')E_{\beta}(-\mu t^{\beta}|\lambda|^{\alpha})E_{\beta}(-\mu (t')^{\beta}|\lambda|^{\alpha})d\lambda\nonumber,
 \end{align}
 which completes the proof.
\end{proof}
\begin{theorem}\label{thm2ch5}
Let $u(t,x)$, $t > 0$, $x \in \mathbb{R}$, denote the random field defined by equations~{\rm (\ref{modch5})} and~{\rm(\ref{modCch5})} with the random initial condition $\xi(x),$ which covariance function satisfies Assumption~~{\rm \ref{Asumptionmain}} with $A_0\neq0$. Then, for $\alpha>\kappa_0/2$, if $\varepsilon \to 0$, the finite-dimentional distributions of the random fields
\[
U_\varepsilon(t,x) = \varepsilon^{-\frac{\kappa_0\beta}{2\alpha}}u\left( \frac{t}{\varepsilon}, \frac{x}{\varepsilon^{\beta/\alpha}} \right), \, t>0, x\in \mathbb{R},
\]
converge weakly to the finite-dimensional distributions of the zero-mean Gaussian field
\begin{equation}\label{Gzeromodch5}
    U_0(t,x) = \sqrt{c_{2}(\kappa_0)A_{0}}\int_{\mathbb{R}} \frac{e^{i \lambda x} E_\beta(-\mu|\lambda|^\alpha t^\beta)}{|\lambda|^{(1-\kappa_0)/2}}W(d\lambda), \,
\end{equation}
which has the covariance function
\begin{align}\label{covthmch5}
     {\rm Cov}(U_0(t,x), U_0(t', x')) =& 2A_{0}c_{2}(\kappa_0) \int_{0}^{+\infty} \frac{\cos\left( \lambda(x - x') \right)}{|\lambda|^{1 - \kappa_0}} E_\beta(-\mu|\lambda|^{\alpha} t^\beta)\nonumber\\&\times E_\beta(-\mu|\lambda|^{\alpha} (t')^\beta) \,d\lambda.
\end{align}

\end{theorem}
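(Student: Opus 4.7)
The plan is to follow the spectral approach used in the proof of Theorem \ref{the1ch5}, but with the scaling now calibrated to the singularity of $f(\lambda)$ at the origin rather than at the cyclic frequencies $\pm w_j$. The dominant contribution in the limit comes from the $j=0$ term of the decomposition \eqref{spectral density Ch5}, whose magnitude blows up like $c_{2}(\kappa_0)A_0|\lambda|^{\kappa_0-1}/2$ when $\lambda\to 0$, while the cyclic components at $\pm w_j$ remain bounded near zero and are suppressed by the new prefactor $\varepsilon^{(1-\kappa_0)\beta/(2\alpha)}$ arising from the normalization $\varepsilon^{-\kappa_0\beta/(2\alpha)}$.

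First, I would insert the spectral representation \eqref{solution ch5} into $U_\varepsilon(t,x)$, apply the change of variable $\lambda=\tilde\lambda\varepsilon^{\beta/\alpha}$, and use the Brownian scaling $W(d(\tilde\lambda\varepsilon^{\beta/\alpha}))\overset{d}{=}\varepsilon^{\beta/(2\alpha)}W(d\tilde\lambda)$. Collecting powers of $\varepsilon$ yields
\begin{equation*}
U_\varepsilon(t,x)\overset{d}{=}\int_{\mathbb R}e^{i\tilde\lambda x}E_\beta\bigl(-\mu|\tilde\lambda|^{\alpha}(1+|\tilde\lambda\varepsilon^{\beta/\alpha}|^2)^{\gamma/2}t^\beta\bigr)\varepsilon^{(1-\kappa_0)\beta/(2\alpha)}\sqrt{f(\tilde\lambda\varepsilon^{\beta/\alpha})}\,W(d\tilde\lambda).
\end{equation*}
Using $\theta_{\kappa_0}(|\lambda|)\to 0$ as $\lambda\to 0$ together with the boundedness of the $j\neq 0$ terms in \eqref{spectral density Ch5}, the spectral prefactor converges pointwise to the appropriate constant multiple of $|\tilde\lambda|^{(\kappa_0-1)/2}$, while $(1+|\tilde\lambda\varepsilon^{\beta/\alpha}|^2)^{\gamma/2}\to 1$. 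This identifies the pointwise limit of the integrand as the kernel in \eqref{Gzeromodch5}.

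The central task is to upgrade this pointwise convergence to $\mathbb E|U_\varepsilon(t,x)-U_0(t,x)|^2\to 0$ via the generalized Lebesgue dominated convergence theorem. Following the template of Theorem \ref{the1ch5}, I would apply the Wiener-It\^o isometry to rewrite the $L^2$ difference as a single integral over $\tilde\lambda$, factor out the common kernel $E_\beta(-\mu t^\beta|\tilde\lambda|^\alpha)$ of $U_0$, and dominate the squared kernel difference by the sum of the individual squared kernels using $(a-b)^2\leq 2a^2+2b^2$. The integration is then split into the regions $|\tilde\lambda|\leq\varepsilon^{-\delta}$ and $|\tilde\lambda|>\varepsilon^{-\delta}$ for a suitable $\delta<\beta/\alpha$. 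On the bounded region, sandwich bounds coming from the complete monotonicity of $E_\beta(-\cdot)$ and the local power-law asymptotic of $f$ at the origin give matching upper and lower limits; on the tail, the Mittag-Leffler bound \eqref{upperch5} produces a factor of order $\varepsilon^{2\delta\alpha}$ which beats the rescaling when $\delta$ is chosen large enough, while the residual integral is controlled by the integrability of $f$ on $\mathbb R$.

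The main obstacle, as in Theorem \ref{the1ch5}, is verifying the finiteness of the limiting $L^2$ norm at infinity: the limit integrand $E_\beta^2(-\mu t^\beta|\lambda|^\alpha)|\lambda|^{\kappa_0-1}$ decays like $|\lambda|^{\kappa_0-1-2\alpha}$ by \eqref{upperch5}, so integrability forces precisely $\alpha>\kappa_0/2$, which is the sharp generalization of the condition $\alpha>1/2$ and reduces to it in the formal case $\kappa_0=1$. Once $L^2$-convergence of the finite-dimensional distributions is established, the covariance formula \eqref{covthmch5} follows directly from \eqref{Gzeromodch5} by the Wiener-It\^o isometry and folding the resulting integral onto $[0,+\infty)$ using the parity of the integrand.
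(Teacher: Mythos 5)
Your proposal is correct and follows the paper's strategy in all essential respects: the spectral representation, the change of variable $\lambda=\tilde\lambda\varepsilon^{\beta/\alpha}$ with Brownian scaling, the identification of the limit kernel proportional to $|\lambda|^{(\kappa_0-1)/2}E_\beta(-\mu t^\beta|\lambda|^\alpha)$, the reduction of finite-dimensional convergence to $L^2$-convergence via the Wiener--It\^o isometry and the generalized dominated convergence theorem, the sharp condition $\alpha>\kappa_0/2$ coming from the tail decay $|\lambda|^{\kappa_0-1-2\alpha}$ via \eqref{upperch5}, and the covariance formula by isometry and parity. The one step where you genuinely diverge is the verification of the generalized-DCT hypothesis: you propose to re-use the frequency split $|\lambda|\le\varepsilon^{-\delta}$ versus $|\lambda|>\varepsilon^{-\delta}$ from the proof of Theorem~\ref{the1ch5}, whereas the paper instead decomposes the dominating integral according to the components of the spectral density \eqref{spectral density Ch5}: after rescaling, each cyclic term $j\neq 0$ carries an explicit factor $\varepsilon^{(1-\kappa_0)\beta/\alpha}$ that sends it to zero, while the $j=0$ term converges by ordinary dominated convergence because $(1-\theta_{\kappa_0}(|\lambda\varepsilon^{\beta/\alpha}|))/|\lambda|^{1-\kappa_0}$ is bounded by a constant multiple of $|\lambda|^{\kappa_0-1}$, which is integrable against $E_\beta^2$ exactly when $\alpha>\kappa_0/2$. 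Your variant also works — the tail estimate forces $\delta>\kappa_0\beta/(2\alpha^2)$, which is compatible with $\delta<\beta/\alpha$ precisely under the hypothesis $\alpha>\kappa_0/2$ — but it is somewhat longer; the paper's component-wise decomposition exploits the extra power of $\varepsilon$ that is specific to the $A_0\neq 0$ normalization and is unavailable in the purely cyclic case of Theorem~\ref{the1ch5}.
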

\begin{proof}
Using the same change of variables
\(
\lambda = \tilde{\lambda} \varepsilon^{\beta/\alpha},
\) as in the proof of Theorem~\ref{the1ch5}
one obtains
\begin{align}\label{firstch5}U_{\varepsilon}(t,x)
\overset{d}{=}&\;\sqrt{A_{0}c_2(\kappa_0)}
\int_{\mathbb{R}}
\frac{e^{i \tilde{\lambda} x} }{|\tilde{\lambda}|^{\frac{1-\kappa_0}{2}}}
E_{\beta}\!\left(
   -\mu \left|\tilde{\lambda}\,\varepsilon^{\beta/ \alpha}\right|^{\alpha}
   \Big(1 + \big|\tilde{\lambda}\,\varepsilon^{\beta/ \alpha}\big|^{2}\Big)^{\gamma/2}
   \left(\frac{t}{\varepsilon}\right)^{\beta}
\right) \nonumber \\
&\times
\Bigg(\sum_{\substack{j = -n \\ j \neq 0}}^{n}
\frac{A_{j}c_2(\kappa_j)}{2A_{0}c_{2}(\kappa_0)}
|\tilde{\lambda}|^{1-\kappa_0}\,\varepsilon^{(1-\kappa_0)\frac{\beta}{\alpha}}
   \frac{1-\theta_{\kappa_j}(|\tilde{\lambda}\,\varepsilon^{\beta/\alpha} + w_j|)}
        {\big|\tilde{\lambda}\,\varepsilon^{\beta/\alpha} + w_j\big|^{\,1-\kappa_j}}
 \nonumber \\
&
+ \Big(1 - \theta_{\kappa_0}\!\big(|\tilde{\lambda}\,\varepsilon^{\beta/\alpha}|\big)\Big)
\Bigg)^{1/2}
\, W(d\tilde{\lambda}).
\end{align}
It follows from identities (\ref{Gzeromodch5}) and (\ref{firstch5}) that
\begin{align*}
R(t,x)&
= \mathbb{E} \left( U_\varepsilon(t,x) - U_0(t,x) \right)^2 \\
&= \mathbb{E} \Bigg( \sqrt{A_{0}c_{2}(\kappa_{0})}
\int_{\mathbb{R}} \frac{e^{i \lambda x}}{|\lambda|^{\frac{1-\kappa_0}{2}}}
E_\beta \!\left(
-\mu |\lambda|^\alpha
\left(1 + |\lambda \varepsilon^{\beta/\alpha}|^2 \right)^{\gamma/2} t^\beta
\right) \\
& \times \Bigg( \sum_{\substack{j = -n \\ j \neq 0}}^{n}\frac{A_{j}c_2(\kappa_j)}{2A_{0}c_{2}(\kappa_0)}
|\lambda|^{1-\kappa_0}\,\varepsilon^{(1-\kappa_0)\frac{\beta}{\alpha}}
\frac{1 - \theta_{\kappa_j}(|\lambda \varepsilon^{\beta/\alpha}+w_j|)}
     {|\lambda \varepsilon^{\beta/\alpha} + w_j|^{1 - \kappa_j}}
 \\
&+
\left( 1 - \theta_{\kappa_0}\!\left( |\lambda \varepsilon^{\beta/\alpha}| \right) \right)
\Bigg)^{1/2} W(d\lambda)
- \sqrt{A_{0}c_2(\kappa_0)} \int_{\mathbb{R}}\frac{e^{i \lambda x}}{|\lambda|^{\frac{1-\kappa_0}{2}}}
E_\beta\!\left(-\mu |\lambda|^\alpha t^\beta\right)
W(d\lambda)
\Bigg)^2\\
&= A_{0}c_2(\kappa_0)
\int_{\mathbb{R}}
\left(
\frac{E_\beta(-\mu |\lambda|^\alpha t^\beta)}{|\lambda|^{\frac{1 - \kappa_0}{2}}}
\left(
\tilde{Q}_{\varepsilon}(\lambda) - 1
\right)
\right)^2 d\lambda,
\end{align*}
where \begin{align}\label{Qch51}
   & \tilde{Q}_{\varepsilon}(\lambda) =
\frac{ E_\beta\!\left(-\mu |\lambda|^\alpha\left(1 + |\lambda \varepsilon^{\beta/\alpha}|^2\right)^{\gamma/2} t^\beta\right)}
     {E_\beta\!\left(-\mu |\lambda|^\alpha t^\beta\right)}\Bigg(
\frac{|\lambda|^{1-\kappa_0}\,\varepsilon^{(1-\kappa_0)\frac{\beta}{\alpha}}}{2} \nonumber\\
&\times
\sum_{\substack{j = -n \\ j \neq 0}}^{n}\frac{A_{j}c_2(\kappa_j)}{A_{0}c_2(\kappa_0)}
\frac{1 - \theta_{\kappa_j}(|\lambda \varepsilon^{\beta /\alpha}+w_j|)}
     {|\lambda \varepsilon^{\beta/\alpha} + w_j|^{1-\kappa_j}} +(1 - \theta_{\kappa_0}(|\lambda \varepsilon^{\beta/\alpha}|))
\Bigg)^{1/2}.
\end{align}
Since $\theta_{\kappa_j}(\cdot)$ are continuous at $w_j,\, j=1,...,n,$ and
$\lim_{\varepsilon \to 0} \theta_{\kappa_0}(\varepsilon)=0, $ it follows that, pointwise for all $\lambda\in\mathbb{R}$, it holds $\lim_{\varepsilon \to 0} \tilde{Q}_\varepsilon(\lambda)=1$.
Then, similar to the proof of Theorem \ref{the1ch5}, the generalized Lebesgue dominated convergence theorem can be applied.

To justify its conditions, one needs to show that the limit
\begin{align}
&\lim_{\varepsilon\to 0}A_{0}c_{2}(\kappa_0)\int_{\mathbb{R}}\frac{E_\beta^{2}(-\mu |\lambda|^\alpha t^\beta)}{|\lambda|^{1 - \kappa_0}}
 \left( {\tilde{Q}^{2}_{\varepsilon}(\lambda)}+1 \right) d\lambda\label{1ch5}\\&=A_{0}c_{2}(\kappa_0)\int_{\mathbb{R}}\frac{E_\beta^{2}(-\mu |\lambda|^\alpha t^\beta)}{|\lambda|^{1 - \kappa_{0}}}\lim_{\varepsilon\to 0}\left(  {\tilde{Q}^{2}_{\varepsilon}(\lambda)}+1 \right)
 d\lambda\nonumber\\&=2A_{0}c_{2}(\kappa_0)\int_{\mathbb{R}}\frac{E_\beta^{2}(-\mu |\lambda|^\alpha t^\beta)}{|\lambda|^{1 - \kappa_{0}}}
 d\lambda<\infty.\label{limit ch5}
\end{align}
 To show that the integral in \eqref{limit ch5} is finite for $\kappa_{0}\in(0,1)$, let us change the variable as  $\tilde{\lambda}=\mu t^{\beta}\lambda^{\alpha}$. Then, $\lambda=\left(\tilde{\lambda}/(\mu t^{\beta})\right)^{\frac{1}{\alpha}},$  $d\lambda=\tilde{\lambda}^{{1/\alpha}-1}/(\alpha(\mu t^{\beta})^{1/\alpha})d\tilde{\lambda}.$
By splitting the integral into two parts for some $A > 0$ and using the upper bound from~\eqref{upperch5}, it follows that the expression in \eqref{limit ch5} is equal
\begin{align*}
  &4A_{0}c_{2}(\kappa_0)\alpha^{-1} (\mu t^{\beta})^{-\kappa_{0}/\alpha} \int_0^{\infty} E_\beta^{2}(-\tilde{\lambda}) \tilde{\lambda}^{-\frac{1 - \kappa_{0}}{\alpha}} \tilde{\lambda}^{\frac{1}{\alpha} - 1} \, d\tilde{\lambda}\nonumber\\&
\leq4A_{0}c_{2}(\kappa_0)\alpha^{-1}  (\mu t^{\beta})^{-\kappa_{0}/\alpha} \left( \int_0^{A} \frac{\tilde{\lambda}^{\frac{\kappa_{0} }{\alpha}-1}}{\left(1 +\tilde{\lambda}/\Gamma(1+\beta)\right)^2}d\tilde{\lambda} + \int_{A}^{+\infty} \frac{\tilde{\lambda}^{\frac{\kappa_{0} }{\alpha}-1}}{\left(1 +\tilde{\lambda}/\Gamma(1+\beta)\right)^2}d\tilde{\lambda} \right)\nonumber\\&\leq4A_{0}c_{2}(\kappa_0)\alpha^{-1}  (\mu t^{\beta})^{-\kappa_{0}/\alpha} \left( \int_0^{A} \frac{d\tilde{\lambda}}{\tilde{\lambda}^{1-\frac{\kappa_{0}}{\alpha}}} + \int_A^{+\infty} \frac{\Gamma^{2}(1+\beta)}{\tilde{\lambda}^{3-\frac{\kappa_{0}}{\alpha}}}d\tilde{\lambda} \right).
\end{align*}
The integral above is finite if $\alpha>\kappa_{0}/2$.
It follows from the upper and lower bounds in~(\ref{upperch5}) that this condition is sufficient and necessary for the boundedness of the integral in~(\ref{limit ch5}).

Now, let's consider the integral in (\ref{1ch5})
\begin{align}
&\int_{\mathbb{R}}
  \frac{E_\beta^{2}(-\mu |\lambda|^\alpha t^\beta)}{|\lambda|^{1 - \kappa_{0}}}
  \left(  \tilde{Q}^2_{\varepsilon}(\lambda)  + 1 \right)\, d\lambda
   =\frac{\varepsilon^{(1-\kappa_0)\frac{\beta}{\alpha}}}{2}\int_{\mathbb{R}}E_\beta^{2}(-\mu |\lambda|^\alpha(1 + |\lambda\varepsilon^{\beta/\alpha}|^2)^{\gamma/2} t^\beta)\nonumber\\ &\quad\times\sum_{\substack{j = -n \\ j \neq 0}}^{n}\frac{A_{j}c_2(\kappa_j)}{A_{0}c_2(\kappa_0)}\frac{1 - \theta_{\kappa_j}(|\lambda \varepsilon^{\beta/\alpha}+w_j|) } {|\lambda \varepsilon^{{\beta}/{\alpha}}+w_j|^{1-\kappa_j}} d\lambda +\int_{\mathbb{R}}\frac{ E_\beta^{2}(-\mu |\lambda|^\alpha(1 + |\lambda\varepsilon^{\beta /\alpha}|^2)^{\gamma/2} t^\beta)}
{|\lambda|^{1 - \kappa_{0}}}\nonumber\\
&\quad\times(1 - \theta_{\kappa_0}(|\lambda \varepsilon^{\beta /\alpha}|)) d\lambda+\int_{\mathbb{R}}
   \frac{E_\beta^{2}(-\mu |\lambda|^\alpha t^\beta)}{|\lambda|^{1 - \kappa_{0}}}d\lambda\nonumber
   \end{align}
   \begin{equation} \label{lonch5}
   =\frac{\varepsilon^{(1-\kappa_0)\frac{\beta}{\alpha}}}{2}\sum_{\substack{j = -n \\ j \neq 0}}^{n}\frac{A_{j}c_2(\kappa_j)}{A_{0}c_2(\kappa_0)}I_{1}(\varepsilon^{\beta/\alpha})+I_{2}(\varepsilon^{\beta/\alpha})+I_{3}.
\end{equation} Note that,
$\lim_{\varepsilon\to 0}{\varepsilon^{(1-\kappa_0)\frac{\beta}{\alpha}}}\sum_{\substack{j = -n \\ j \neq 0}}^{n}\frac{A_{j}c_2(\kappa_j)}{A_{0}c_2(\kappa_0)}(I_{1}(\varepsilon^{\beta/\alpha})+I_{2}(\varepsilon^{\beta/\alpha}))=0$, as $\varepsilon^{(1-\kappa_0)\frac{\beta}{\alpha}}\to 0$ and the integrals $I_{1}$ and $I_{2}$ are bounded. The integral $I_{3}$ is finite as was shown on \eqref{limit ch5}.
It follows from $\lim_{\varepsilon\to 0}\theta_{0}(\varepsilon)=0, \,\theta_{0}(\lambda)\in[0,1]$, and the dominated convergence theorem that the integral $I_{2}(\varepsilon^{\beta/\alpha})\to I_{3}$, when $\varepsilon\to 0$.  Hence, the result in \eqref{1ch5}-\eqref{limit ch5} follows from \eqref{lonch5}. By the generalized dominated convergence theorem  $\lim_{\varepsilon \to 0}E{\vert U_{\varepsilon}(t,x)-U_{0}(t,x)\vert}^{2}=0,$ which implies the convergence of finite-dimensional distributions.

Finally, to obtain equation \eqref{covthmch5} note that
 \begin{align}&{\rm Cov}(U_{0}(t,x),U_{0}(t',x'))=\mathbb E U_{0}(t,x)U_{0}(t^{\prime},x^{\prime})\nonumber\\&=\mathbb E\left(A_{0}c_{2}(\kappa_0)\int_{\mathbb R^2} \frac{e^{i\lambda_{1}x}}{|\lambda_1|^{\frac{1-\kappa_{0}}{2}}} E_{\beta}\left(-\mu t^{\beta}|\lambda_{1}|^{\alpha}\right) \frac{e^{i\lambda_{2}x^{\prime}}}{{|\lambda_2|^\frac{1-\kappa_{0}}{2}}} E_{\beta}\left(-\mu (t')^{\beta}|\lambda_{2}|^{\alpha}\right)W(d\lambda_{1})W(d\lambda_{2})\right)\nonumber\\
     &=2A_{0}c_{2}(\kappa_0)\int_{0}^{
  +\infty}\frac{\cos(\lambda(x-x'))}{|\lambda|^{1-\kappa_0}}E_{\beta}(-\mu t^{\beta}|\lambda|^{\alpha})E_{\beta}(-\mu (t')^{\beta}|\lambda|^{\alpha})d\lambda\nonumber,
 \end{align}
 which completes the proof.
\end{proof}
\begin{Remark}
Similar to the paper \cite{alghamdi2025multiscalingasymptoticbehaviorsolutions}, the singularity at the origin "dominates" all other singularities and completely determines the asymptotic behavior.
\end{Remark}

Notice that the limit random fields in Theorems~\ref{the1ch5} and~\ref{thm2ch5} are stationary in the coordinate~$x$ and temporally nonstationary in the coordinate~$t$.

\begin{example}
This example demonstrates how the covariance function of the limit field
$U_{0}(t,x)$, given in Theorems~\ref{the1ch5} and \ref{thm2ch5}, depends on time,
space, and the parameters. Without loss of generality, we assume that in
(\ref{cov of Bessel ch5}) and (\ref{covthmch5}) the constants
$\sum_{j=1}^{n} 2 c_{1}(\kappa_{j}) A_{j}
K_{\frac{\kappa_{j}-1}{2}}(|w_{j}|)\, |w_{j}|^{\frac{\kappa_{j}-1}{2}}$
and $2A_{0}c_{2}(\kappa_{0})$
are equal to $1$. Since the covariance
$\mathrm{Cov}\!\left(U_{0}(t,x),\, U_{0}(t',x')\right)$
depends on four variables $t$, $t'$, $x$, and $x'$, as well as on the parameters
$\alpha$, $\beta$, and $\mu$, we illustrate its behavior using two-dimensional
plots in which some variables are held fixed. We set $\mu = \alpha = 1$, $\beta=1/2$, and consider the cases of
$\kappa_{0} =0.2,\, 0.5,$ and $0.7$.

For the case $\beta = {1}/{2}$, we use the identity
$E_{1/2}(z) = e^{z^{2}} \operatorname{erfc}(-z),$
where the Gauss error function is defined by
$\operatorname{erf}(z) = \frac{2}{\sqrt{\pi}} \int_{0}^{z} e^{-t^{2}}\, dt.$

For $t = t' = 1$, Figure~\ref{covfigch5} shows the covariance as a function of
the spatial increment $x-x' \in [-15, 15]$. The covariance is a symmetric function. The figure
also demonstrates that smaller values of $\kappa_{0}$ yield larger
covariances and slower spatial decay. In contrast, larger values of $\kappa_{0}$ produce smaller peaks and
more rapid decay as $|h|$ increases.

 For $x-x'=1$ and $t=1$, Figure \ref{covmfigch5} presents the covariance decreases monotonically
as $t'$ increases. As before,
smaller values of $\kappa_{0}$ yield larger covariance values and slower decay,
while larger values of $\kappa_{0}$ result in smaller values and more rapid
decay as $t'$ grows.
    \begin{figure}[tb]
        \centering
        \includegraphics[width=0.9\linewidth]{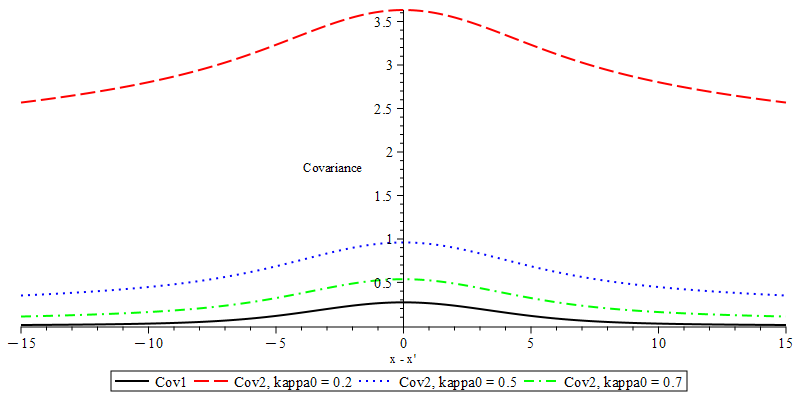}
        \caption{Covariance as a function of $x-x'$ with $t=t'=1$}
        \label{covfigch5}
    \end{figure}
    \begin{figure}
        \centering
        \includegraphics[width=0.9\linewidth]{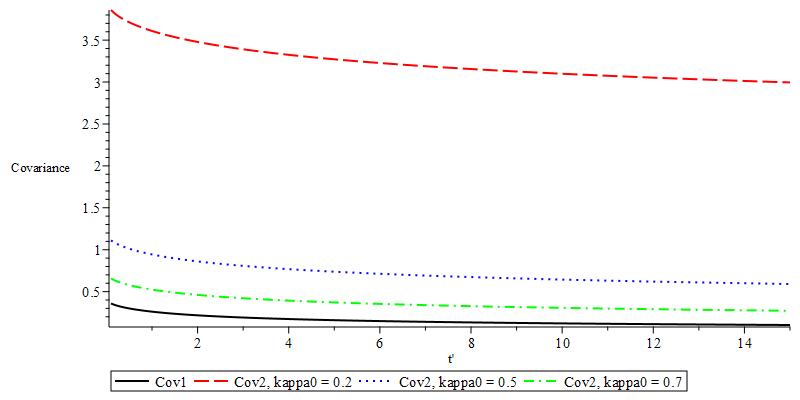}
        \caption{Covariance as a function of $t'$ with fix $x-x'=1$ and $t=1$}
        \label{covmfigch5}
    \end{figure}
\end{example}
\section{Multiscaling limit theorem in the case of regular variation} \label{sec4ch5}
This section presents a generalization of Theorem \ref{thm2ch5} for the case of regular varying asymptotics. To formulate the corresponding results, we need the following definitions and properties of slowly varying functions.
\begin{definition} \cite{seneta2006regularly}
A function $L(\cdot)$ is slowly varying at infinity if it is real-valued, positive, and measurable on $[A,\infty)$ for some $A>0$, and satisfies the following condition for every $\lambda >0$:
\begin{equation*}
    \lim_{x\to \infty} \frac{L(\lambda x)}{L(x)}=1.
\end{equation*}
\end{definition}
\begin{definition}\cite{Bingham_Goldie_Teugels}
A measurable function $f(\cdot)>0$ is regularly varying with index $\rho$ if it satisfies the following asymptotic property:
\[
\lim_{x\to \infty}\frac{f(\lambda x)}{f(x)}=\lambda^{\rho}, \quad \lambda>0,
\]
for some finite constant $\rho \in \mathbb{R}$.
\end{definition}
\begin{Proposition}\cite[Proposition 1.3.6]{Bingham_Goldie_Teugels}\label{propch5}
    If $L$ varies slowly and $\alpha>0,$
    \[x^{\alpha}L(x)\to\infty, \quad x^{-\alpha}L(x)\to 0,\quad (x\to \infty). \]
\end{Proposition}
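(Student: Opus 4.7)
The plan is to invoke the Karamata representation theorem for slowly varying functions (or, equivalently, the Potter bounds). The representation theorem states that a function $L$ is slowly varying at infinity if and only if there exist $a>0$, a measurable function $c(\cdot)$ with $c(x)\to c_0\in(0,\infty)$, and a measurable function $\varepsilon(\cdot)$ with $\varepsilon(t)\to 0$ as $t\to\infty$, such that
\[
L(x)=c(x)\,\exp\!\left(\int_a^x \frac{\varepsilon(t)}{t}\,dt\right), \qquad x\geq a.
\]
This is the key structural fact that converts the definitional limit condition into explicit pointwise bounds.

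The main step is to use this representation to bound $L(x)$ between small powers of $x$. Given $\alpha>0$, choose $\delta=\alpha/2$. By the convergence $\varepsilon(t)\to 0$, there exists $T\geq a$ such that $|\varepsilon(t)|<\delta$ for all $t\geq T$. Splitting the integral at $T$ and using the bound on $\varepsilon$ gives, for $x\geq T$,
\[
\left|\int_a^x \frac{\varepsilon(t)}{t}\,dt\right|
\leq M + \delta\,(\log x - \log T),
\]
where $M:=|\int_a^T \varepsilon(t)/t\,dt|$ is a finite constant. Combined with the boundedness of $c(x)$ away from $0$ and $\infty$ for large $x$, this yields constants $0<c_1\leq c_2<\infty$ such that
\[
c_1\,x^{-\delta}\leq L(x)\leq c_2\,x^{\delta}, \qquad x\geq T.
\]

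With these two-sided power bounds, the conclusion is immediate. Since $\delta=\alpha/2<\alpha$, for $x\geq T$ we have $x^{\alpha}L(x)\geq c_1 x^{\alpha-\delta}=c_1 x^{\alpha/2}\to\infty$ and $x^{-\alpha}L(x)\leq c_2 x^{-\alpha+\delta}=c_2 x^{-\alpha/2}\to 0$ as $x\to\infty$. The principal obstacle, and the only substantive content of the proof, lies in establishing the Karamata representation; this in turn depends on the uniform convergence theorem for slowly varying functions, which asserts that $L(\lambda x)/L(x)\to 1$ uniformly on compact subsets of $(0,\infty)$. Once the representation is granted, the argument reduces to the two-line estimate above.
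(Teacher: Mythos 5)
Your proof is correct. The paper offers no proof of its own --- the proposition is simply quoted from Bingham--Goldie--Teugels --- and your argument via the Karamata representation theorem (bounding $L(x)$ between $c_1x^{-\delta}$ and $c_2x^{\delta}$ for arbitrarily small $\delta>0$) is precisely the standard derivation given in that reference.
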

We will use the next generalization of Assumplion~\ref{Asumptionmain} and the spectral density (\ref{spectral density Ch5}).
\begin{assumption}\label{assumption 2ch5}
    The spectral density of the random process $\xi(x)$ is given by \[f^{(L)}(\lambda)=\sum_{j=-n}^{n} \frac{c_{2}(\kappa_j)}{2}A_{j}\frac{1-\theta_{\kappa_j}(|\lambda+w_{j}|)}{|\lambda+w_j|^{1-\kappa_j}}L_{j}\left(\frac{1}{|\lambda+w_j|}\right),\]
    where $L_{j}(\cdot),\, j=-n,...,n,$ are slowly varying at infinity functions, which are bounded on each finite integral. All other values are the same as in Assumption \ref{Asumptionmain}.
\end{assumption}
As can be seeing from this assumption, the spectral density has power-type singularities at the locations $w_{j}, \, j=-n,...,n,$ with the corresponding power-laws parameters $1-\kappa_{j}$. The deviations of the spectral density from this power-laws are given by slowly varying functions $L_{j}(\cdot)$. The boundedness of functions $L_{j}(\cdot)$ on each finite interval is required to not introduce other singularities in the spectrum.
\begin{theorem}
    Let $u(t,x)$, $t > 0$, $x \in \mathbb{R}$, be the random field defined by equations~{\rm (\ref{modch5})} and~{\rm(\ref{modCch5})} with the random initial condition $\xi(x),$ which covariance function satisfies Assumption~~{\rm \ref{assumption 2ch5}} with $A_0\neq0$. Then, for $\alpha>\kappa_0/2$, if $\varepsilon \to 0$, the finite-dimentional distributions of the random fields
\[
U^{(L)}_\varepsilon(t,x) = \frac{\varepsilon^{-\frac{\kappa_0\beta}{2\alpha}}}{L_{0}\left({\varepsilon^{-\beta/\alpha}}\right)}u\left( \frac{t}{\varepsilon}, \frac{x}{\varepsilon^{\beta/\alpha}} \right), \, t>0, \ x\in \mathbb{R},
\]
converge weakly to the finite-dimensional distributions of the zero-mean Gaussian field
\begin{equation*}
    U_0(t,x) = \sqrt{c_{2}(\kappa_0)A_{0}}\int_{\mathbb{R}} \frac{e^{i \lambda x} E_\beta(-\mu|\lambda|^\alpha t^\beta)}{|\lambda|^{(1-\kappa_0)/2}}W(d\lambda), \,
\end{equation*}
\end{theorem}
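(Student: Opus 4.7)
The plan is to adapt the proof of Theorem~\ref{thm2ch5}, replacing the base spectral density by its slowly varying generalisation and invoking Karamata theory at each step. First I would insert the spectral representation \eqref{solution ch5} with $\sqrt{f^{(L)}(\lambda)}$ into the definition of $U^{(L)}_{\varepsilon}(t,x)$, apply the change of variable $\lambda=\tilde\lambda\,\varepsilon^{\beta/\alpha}$ together with the Brownian scaling $W(d(\tilde\lambda\varepsilon^{\beta/\alpha}))\overset{d}{=}\varepsilon^{\beta/(2\alpha)}W(d\tilde\lambda)$, and then divide by $L_{0}(\varepsilon^{-\beta/\alpha})$. This produces an identity of the same form as \eqref{firstch5}, except that each summand under the square root carries an extra factor $L_{j}(|\tilde\lambda\varepsilon^{\beta/\alpha}+w_{j}|^{-1})$ and the prefactor $\sqrt{A_{0}c_{2}(\kappa_{0})}$ is weighted by $L_{0}(\varepsilon^{-\beta/\alpha})^{-1}$.

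Next I would compute $R(t,x)=\mathbb E\bigl(U^{(L)}_{\varepsilon}(t,x)-U_{0}(t,x)\bigr)^{2}$ using the It\^o isometry, factor out the limit kernel $A_{0}c_{2}(\kappa_{0})E_{\beta}^{2}(-\mu|\lambda|^{\alpha}t^{\beta})/|\lambda|^{1-\kappa_{0}}$, and write the integrand as $\bigl(\tilde Q^{(L)}_{\varepsilon}(\lambda)-1\bigr)^{2}$, in strict analogy with \eqref{Qch51}. The pointwise limit $\tilde Q^{(L)}_{\varepsilon}(\lambda)\to 1$ for each fixed $\lambda\ne 0$ rests on three ingredients: the continuity of $\theta_{\kappa_{j}}$ at $w_{j}\ne 0$ together with $\theta_{\kappa_{0}}(0)=0$; the uniform convergence theorem for slowly varying functions, which yields $L_{0}(|\tilde\lambda|^{-1}\varepsilon^{-\beta/\alpha})/L_{0}(\varepsilon^{-\beta/\alpha})\to 1$; and Proposition~\ref{propch5}, which guarantees that the $j\ne 0$ summands, carrying the extra factor $\varepsilon^{(1-\kappa_{0})\beta/\alpha}$, vanish in the limit because $\varepsilon^{-\eta}/L_{0}(\varepsilon^{-\beta/\alpha})\to 0$ for every $\eta>0$ while $L_{j}$ stays bounded near $w_{j}$.

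I would then verify the hypotheses of the generalised dominated convergence theorem by splitting the integration region into $|\lambda|\le\varepsilon^{-\delta}$ and $|\lambda|>\varepsilon^{-\delta}$ with $\delta\in(\beta/(2\alpha^{2}),\beta/\alpha)$, mirroring \eqref{firstintegralch5}. On the outer region the Mittag-Leffler decay bound \eqref{upperch5} dominates any slowly varying factor thanks to Proposition~\ref{propch5}. On the inner region I would invoke Potter's bound, which asserts that for every $\eta>0$ there exists $C_{\eta}>0$ such that $L_{0}(|\tilde\lambda|^{-1}\varepsilon^{-\beta/\alpha})/L_{0}(\varepsilon^{-\beta/\alpha})\le C_{\eta}\bigl(|\tilde\lambda|^{\eta}+|\tilde\lambda|^{-\eta}\bigr)$ uniformly in small $\varepsilon$; combining this with $|\theta_{\kappa_{j}}|\le 1$ and with the condition $\alpha>\kappa_{0}/2$ (inherited from Theorem~\ref{thm2ch5}) yields an integrable majorant of the form $E_{\beta}^{2}(-\mu|\lambda|^{\alpha}t^{\beta})\bigl(|\lambda|^{\kappa_{0}-1+\eta}+|\lambda|^{\kappa_{0}-1-\eta}\bigr)$, which is integrable for $\eta$ sufficiently small. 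The $L^{2}$ convergence then delivers the convergence of finite-dimensional distributions, and the limit covariance coincides with \eqref{covthmch5}, since the limit field is identical to \eqref{Gzeromodch5}.

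The main obstacle, as is typical for regular-variation limit theorems, will be the construction of a uniform-in-$\varepsilon$ integrable majorant: Karamata's uniform convergence theorem alone applies on compact subsets bounded away from zero and infinity, so the argument genuinely requires Potter's bounds to control $\tilde Q^{(L)}_{\varepsilon}(\lambda)$ simultaneously on a neighbourhood of the origin, where the $|\lambda|^{\kappa_{0}-1}$ singularity concentrates, and on the tail, where the Mittag-Leffler factor must absorb the extra $|\lambda|^{\eta}$ growth. A careful choice of $\eta<\min(\kappa_{0},2\alpha-\kappa_{0})$ is what ultimately makes the dominating function integrable and closes the argument.
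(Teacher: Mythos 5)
Your proposal is essentially correct, but it takes a genuinely different route from the paper. The paper does not attack $\mathbb{E}\bigl(U^{(L)}_{\varepsilon}-U_{0}\bigr)^{2}$ directly: it inserts the intermediate field $U_{\varepsilon}$ of Theorem~\ref{thm2ch5} (same power-law spectrum, no slowly varying factors), bounds $\mathbb{E}\bigl(U^{(L)}_{\varepsilon}-U_{0}\bigr)^{2}\le 2\,\mathbb{E}\bigl(U^{(L)}_{\varepsilon}-U_{\varepsilon}\bigr)^{2}+2\,\mathbb{E}\bigl(U_{\varepsilon}-U_{0}\bigr)^{2}$, disposes of the second term by Theorem~\ref{thm2ch5}, and reduces the first term via the elementary inequality $\bigl(\sqrt{\sum a_{j}}-\sqrt{\sum b_{j}}\bigr)^{2}\le\sum_{j}|a_{j}-b_{j}|$ to integrals of the form $\int k(\lambda)\bigl|1-L_{j}(\cdot)/L_{0}(\varepsilon^{-\beta/\alpha})\bigr|\,d\lambda$, which for $j=0$ are handled by Karamata-type integral theorems (Theorems 2.6 and 2.7 of Seneta) and for $j\neq 0$ by Proposition~\ref{propch5}. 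You instead work with $\tilde Q^{(L)}_{\varepsilon}(\lambda)\to 1$ directly and build an explicit $\varepsilon$-uniform integrable majorant from Potter's bound, with the splitting $|\lambda|\le\varepsilon^{-\delta}$ versus $|\lambda|>\varepsilon^{-\delta}$. Both routes work: yours is more self-contained and makes the dominating function explicit (with the correct constraint $\eta<\min(\kappa_{0},\,2\alpha-\kappa_{0})$), while the paper's reduction to the already-proven Theorem~\ref{thm2ch5} keeps the incremental argument shorter and avoids re-verifying domination for the whole integrand.

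Two small points to tighten. First, the displayed claim ``$\varepsilon^{-\eta}/L_{0}(\varepsilon^{-\beta/\alpha})\to 0$ for every $\eta>0$'' is false as written (the left-hand side diverges); what Proposition~\ref{propch5} gives, and what you actually need for the $j\neq 0$ summands carrying the prefactor $\varepsilon^{(1-\kappa_{0})\beta/\alpha}$, is $\varepsilon^{\eta}/L_{0}(\varepsilon^{-\beta/\alpha})\to 0$ for every $\eta>0$. Second, Potter's bound controls ratios $L(y)/L(x)$ of the \emph{same} slowly varying function, so it covers only the $j=0$ term $L_{0}(|\lambda|^{-1}\varepsilon^{-\beta/\alpha})/L_{0}(\varepsilon^{-\beta/\alpha})$; for the cross ratios $L_{j}/L_{0}$ with $j\neq 0$ you must argue as you do via Proposition~\ref{propch5} together with the boundedness of $L_{j}$ on finite intervals (and the bound $L_{j}(y)=o(y^{\nu})$ near the points where $\lambda\varepsilon^{\beta/\alpha}+w_{j}$ vanishes), so that the extra negative power of $\varepsilon$ coming from $1/L_{0}(\varepsilon^{-\beta/\alpha})$ is absorbed by $\varepsilon^{(1-\kappa_{0})\beta/\alpha}$. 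With these repairs your argument closes.
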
 \begin{proof}
    Using the same $U_{\varepsilon}(t,x)$ as in Theorem \ref{thm2ch5} one obtains that \begin{align}\label{slowch5}
        \mathbb{E}\left(U^{(L)}_{\varepsilon}(t,x)-U_{0}(t,x)\right)^{2}&\leq2\mathbb{E}\left(U^{(L)}_{\varepsilon}(t,x)-U_{\varepsilon}(t,x)\right)^{2}\nonumber\\&+2\mathbb{E}\left(U_{\varepsilon}(t,x)-U_{0}(t,x)\right)^{2}.
    \end{align} As it was demonstrated in the proof of Theorem \ref{thm2ch5}, the second summand in \eqref{slowch5} approches $0$, when $\varepsilon\to 0$.

    Let us consider the first term in \eqref{slowch5}:\begin{align}
\mathbb{E}&\!\left( U^{(L)}_{\varepsilon}(t,x) - U_{0}(t,x) \right)^{2}
= \int_{\mathbb{R}}
   \frac{
      E^{2}_{\beta}\!\left(
        -\mu |\lambda|^{\alpha}
        \left( 1 + |\lambda \varepsilon^{\beta/\alpha}|^{2} \right)^{\gamma/2}
        t^{\beta}
      \right)
   }{|\lambda|^{1-\kappa_{0}}}
   \nonumber \\
& \times
   \Biggl(
     \Biggl(
       \sum_{j=-n}^{n}
       \frac{A_{j} c_{2}(\kappa_j)}{2}
       \frac{1 - \theta_{\kappa_j}\!\left( |\lambda\varepsilon^{\beta/\alpha} + w_{j}| \right)}
            {|\lambda\varepsilon^{\beta/\alpha} + w_{j}|^{1 - \kappa_{j}}}
       |\lambda|^{1-\kappa_{0}}
       \varepsilon^{(1-\kappa_{0}){\beta/\alpha}}
     \Biggl)^{1/2} -\left(
       \sum_{j=-n}^{n}
       \frac{A_{j} c_{2}(\kappa_j)}{2}\right.
     \nonumber \end{align}
    \begin{equation}\label{the diff ch5} \left.\left. \times
       \frac{1 - \theta_{\kappa_j}\!\left( |\lambda\varepsilon^{\beta/\alpha} + w_{j}| \right)}
            {|\lambda\varepsilon^{\beta/\alpha} + w_{j}|^{1 - \kappa_{j}}}
       |\lambda|^{1-\kappa_{0}}
       \varepsilon^{(1-\kappa_{0}){\beta/\alpha}}
       \frac{
          L_{j}\!\left( \dfrac{1}{|\lambda\varepsilon^{\beta/\alpha} + w_{j}|} \right)
       }{
          L_{0}\!\left( \dfrac{1}{\varepsilon^{\beta/\alpha}} \right)
       }
     \right)^{1/2}
   \right)^{2}
   \, d\lambda .
\end{equation}
Now, we will use the inequalities
\begin{align*}
   & \left(\Bigg(\sum_{j=-n}^{n}a_{j}\Bigg)^{1/2}-\Bigg(\sum_{j=-n}^{n}b_{j}\Bigg)^{1/2}\right)^{2}\leq \left|\Bigg(\sum_{j=-n}^{n}a_{j}\Bigg)^{1/2}-\Bigg(\sum_{j=-n}^{n}b_{j}\Bigg)^{1/2}\right|\nonumber\\&\times\left(\Bigg(\sum_{j=-n}^{n}a_{j}\Bigg)^{1/2}+\Bigg(\sum_{j=-n}^{n}b_{j}\Bigg)^{1/2}\right)\leq|\sum_{j=-n}^{n}(a_{j}-b_{j})|\leq\sum_{j=-n}^{n}|a_j-b_j|,
\end{align*} where $\{a_j\}$ and $\{b_j\}$ are sets of non-negative numbers.

By applying it to \eqref{the diff ch5} one obtains
\begin{align}\label{diffslowch5}
  \mathbb{E}&\left( U^{(L)}_{\varepsilon}(t,x) - U_{0}(t,x) \right)^{2}
= \int_{\mathbb{R}}\frac{E^{2}_{\beta}\left(-\mu |\lambda|^{\alpha}\left( 1 + |\lambda \varepsilon^{\beta/\alpha}|^{2} \right)^{\gamma/2} t^{\beta} \right) }{|\lambda|^{1-\kappa_{0}}}|\lambda|^{1-\kappa_{0}}\varepsilon^{(1-\kappa_{0})^{\beta/\alpha}}
   \nonumber\\& \times \sum_{j=-n}^{n} \frac{A_{j} c_{2}(\kappa_j)}{2}\frac{1 - \theta_{\kappa_j}\!\left( |\lambda\varepsilon^{\beta/\alpha} + w_{j}| \right)} {|\lambda\varepsilon^{\beta/\alpha} + w_{j}|^{1 - \kappa_{j}}}\left\lvert 1- \frac{ L_{j}\left( \dfrac{1}{|\lambda\varepsilon^{\beta/\alpha} + w_{j}|} \right) }{ L_{0}\left( \dfrac{1}{\varepsilon^{\beta/\alpha}} \right) }\right\lvert \, d\lambda .
\end{align} The summands with $j\neq 0$ can be bounded by
\begin{align}\label{diffslowly}
    &\varepsilon^{(1-\kappa_{0})^{\beta/\alpha}}
    \int_{\mathbb{R}}
        E^{2}_{\beta}\left(
            -\mu |\lambda|^{\alpha}
            \left( 1 + |\lambda \varepsilon^{\beta/\alpha}|^{2} \right)^{\gamma/2}
            t^{\beta}
        \right)
        \frac{1 - \theta_{\kappa_{j}}(|\lambda \varepsilon^{\beta/\alpha} + w_{j}|)}
             {|\lambda \varepsilon^{\beta/\alpha} + w_{j}|^{\,1-\kappa_{j}}}
    \, d\lambda
    \nonumber \\
    & +
    \varepsilon^{(1-\kappa_{0})^{\beta/\alpha}}
    \int_{\mathbb{R}}
        E^{2}_{\beta}\left(
            -\mu |\lambda|^{\alpha}
            \left( 1 + |\lambda \varepsilon^{\beta/\alpha}|^{2} \right)^{\gamma/2}
            t^{\beta}
        \right)
        \frac{1 - \theta_{\kappa_{j}}(|\lambda \varepsilon^{\beta/\alpha} + w_{j}|)}
             {|\lambda \varepsilon^{\beta/\alpha} + w_{j}|^{\,1-\kappa_{j}}}
       \nonumber\\& \times\frac{
            L_{j}\left( \dfrac{1}{|\lambda\varepsilon^{\beta/\alpha} + w_{j}|} \right)
        }{
            L_{0}\left( \dfrac{1}{\varepsilon^{\beta/\alpha}} \right)
        }
    d\lambda.
\end{align}
By repeating the same steps as in the proof of Theorem \ref{thm2ch5}, starting from \eqref{Qch51}, one can see that the first integral in \eqref{diffslowly} vanishes, when $\varepsilon\to 0.$

By Proposition \ref{propch5} and Assumption \ref{assumption 2ch5}, for any $\nu>0$, there are positive constants $C,\, C_{1}$ and $\varepsilon_{0}$ that it holds \[\frac{
            L_{j}\left( \dfrac{1}{|\lambda\varepsilon^{\beta/\alpha} + w_{j}|} \right)
        }{
            L_{0}\left( \dfrac{1}{\varepsilon^{\beta/\alpha}} \right)
        }\leq C_{1}\frac{\max \left(C,|\lambda\varepsilon^{\beta/\alpha}+w_{j}|^{-\nu}\right)}{\varepsilon^{\nu}},\]  for all $\lambda$, such that $\lambda\varepsilon^{\beta/\alpha}+w_j\neq 0$ and $\varepsilon\geq\varepsilon_{0}$.

        Therefore, the second integral in \eqref{diffslowly} is bounded by the integral of the same form as the first integral in \eqref{diffslowly} but with the multiplier $\varepsilon^{(1-\kappa_{0}){\beta/\alpha}-\nu}$ and with the denominator power $1-\kappa_{j}+\nu$. As $\nu>0$ can be chosen arbitrarily small, we conclude that the second integral in \eqref{diffslowly} vanishes, when $\varepsilon\to 0$.

        Now, let us consider the summand corresponding to $j=0$ in \eqref{diffslowch5}. Since the Mittag-Leffer function $E_{\beta}(\cdot)$ is a monotonically decreasing function of negative arguments and $|\theta_{\kappa_{0}}(\lambda)|\leq1$ one obtains the next upper bound
        \begin{align}\label{onesqch5}
             &\varepsilon^{(1-\kappa_{0}){\beta/\alpha}}
    \int_{\mathbb{R}}
        E^{2}_{\beta}\left(
            -\mu |\lambda|^{\alpha}
            \left( 1 + |\lambda \varepsilon^{\beta/\alpha}|^{2} \right)^{\gamma/2}
            t^{\beta}
        \right)\frac{A_{0}c_{2}(\kappa_0)}{2}
        \frac{1 - \theta_{\kappa_{0}}(|\lambda \varepsilon^{\beta/\alpha}|)}
             {|\lambda \varepsilon^{\beta/\alpha}|^{\,1-\kappa_{j}}}
       \nonumber\\& \times\left|1-\frac{
            L_{0}\left( \frac{1}{|\lambda\varepsilon^{\beta/\alpha}|} \right)
        }{
            L_{0}\left( \dfrac{1}{\varepsilon^{\beta/\alpha}} \right)
        }\right|d\lambda\leq A_{0}c_{2}(\kappa_0)\int_{0}^{+\infty}\frac{E_{\beta}^{2}(-\mu \lambda^{\alpha}t^{\beta})}{\lambda^{1-\kappa_{0}}}\left|1-\frac{
            L_{0}\left( \dfrac{1}{\lambda\varepsilon^{\beta/\alpha}} \right)
        }{
            L_{0}\left( \dfrac{1}{\varepsilon^{\beta/\alpha}} \right)
        }\right|d\lambda.
        \end{align}

        Note that for $\nu\in \mathbb{R}$, such that $|\nu|<\min(1-\kappa_{0},\kappa_{0})$, it holds
        \begin{equation}\label{secondsqch5}
           \int_{0}^{\infty} \frac{E^{2}_{\beta}(-\mu \lambda^{\alpha}t^{\beta})}{\lambda^{1-\kappa_{0}-\nu}}d\lambda<+\infty.
        \end{equation}
        Then using Theorems 2.6 and 2.7 in \cite{seneta2006regularly} one obtains that the upper bound in \eqref{onesqch5} approaches $0$, when $\varepsilon\to 0$. The condition \eqref{secondsqch5} guarantees the finiteness of the integrals in the conditions of Theorems 2.6 and 2.7 \cite{seneta2006regularly}. Note that, instead of the absolute value $\left|1-\frac{
            L_{0}\left({1}/(\lambda\varepsilon^{\beta/\alpha}) \right)
        }{
            L_{0}\left({\varepsilon^{-\beta/\alpha}} \right)
        }\right|$ these theorems use the difference itself. However, in their proofs, the difference appears only in the application of the uniform convergence theorem on the finite interval $[\alpha,\beta], \, 0<\alpha<\beta<+\infty$. So, the results obviously hold true for the absolute value too. Hence, $\mathbb{E}\left( U^{(L)}_{\varepsilon}(t,x) - U_{0}(t,x) \right)^{2}\to 0$,  when $ \varepsilon\to 0$, which completes the proof.
\end{proof}
\section{Conclusion}\label{conch5}
This paper analyzes fractional Riesz–Bessel equations with random initial conditions exhibiting both classical long-range dependence and cyclic long-range dependence. It establishes the convergence of rescaled solutions to spatio-temporal Gaussian random fields and proves multiscaling limit theorems for regularly varying asymptotics.
It would be interesting to generalize the results and further develop the asymptotic approach in the regular-varying cases. Specifically, to study the case with $A_{0}=0$, when the classical long-memory component is not present. Also, it would be interesting to study the case of more general classes of regularly varying functions (see, \cite{LeonenkoOlenko2013, Olenko2005I, Olenko2007OR_II}). Finally, generalizations of the results to multidimensional spatial settings and subordinated initial conditions remain challenging open problems.
\section*{Acknowledgments}
This research was supported by the Australian Research Council's Discovery Projects funding scheme (project number DP220101680).  A.~Olenko was also partially supported by La Trobe University's SCEMS CaRE and Beyond grant.
\bibliographystyle{amsplain}
\bibliography{Bibliography}

\end{document}